\newcommand{\R}{{\mathbb R}}
\newtheorem{theorem}{Theorem}[section]
\newtheorem{lemma}[theorem]{Lemma}
\newtheorem{proposition}[theorem]{Proposition}
\theoremstyle{definition}
\newtheorem{remark}[theorem]{Remark}
\theoremstyle{remark}
\newcommand{\field}[1]{\mathbb{#1}}
\newcommand{\C}{\field{C}}
\newcommand{\NN}{\mathcal{N}}
\newcommand{\FF}{\mathcal{F}}
\newcommand{\remove}[1]{ }
\def\R{\mathbb R}
\def\be{\begin{equation}}
\def\ee{\end{equation}}
\def\ba{\begin{eqnarray}}
\def\ea{\end{eqnarray}}
\numberwithin{equation}{section}
\begin{document}
\title[Kawahara equation with time-delayed boundary damping]
{\bf  Two stability results for the Kawahara equation with a time-delayed boundary control}

\author[Capistrano--Filho]{Roberto de A. Capistrano--Filho}
\address{Departamento de Matem\'atica,  Universidade Federal de Pernambuco (UFPE), 50740-545, Recife (PE), Brazil.}
\email{roberto.capistranofilho@ufpe.br}
\email{luan.soares@ufpe.br}
\email{victor.hugo.gonzalez.martinez@gmail.com}

\author[Chentouf]{Boumedi\`ene Chentouf*}
\address{Kuwait University, Faculty of Science, Department of Mathematics, Safat 13060, Kuwait}
\email{boumediene.chentouf@ku.edu.kw}
\thanks{*Corresponding author.}

\author[de Sousa ]{Luan S.  de Sousa}

\author[Gonzalez Martinez]{Victor H. Gonzalez Martinez}

\begin{abstract}
In this paper, we consider the Kawahara equation in a bounded interval and with a delay term in one of the boundary conditions.  Using two different approaches, we prove that this system is exponentially stable under a condition on the length of the spatial domain.  Specifically, the first result is obtained by introducing a suitable energy and using the Lyapunov approach, to ensure that the unique solution of the Kawahara system exponentially decays.  The second result is achieved by means of a compactness-uniqueness argument,  which reduces our study to prove an observability inequality.  Furthermore, the main novelty of this work is to characterize the critical set phenomenon for this equation by showing that the stability results hold whenever the spatial length is related to the Möbius transformations.
\end{abstract}

\date{\today}
\maketitle

\noindent \textbf{Keywords:} Nonlinear Kawahara equation; boundary time-delay; exponential stability; critical set.

\thispagestyle{empty}

 \normalsize

\section{Introduction}
\setcounter{equation}{0}

\subsection{Physical motivation and goal} It is well-known that the following fifth order nonlinear dispersive equation
\begin{equation}\label{kaw}
 \pm2 \partial_tu + 3u\partial_xu - \nu \partial^3_xu +\frac{1}{45}\partial^5_xu = 0,
\end{equation}
models numerous physical phenomena.  In fact, considering suitable assumptions on the amplitude, wavelength, wave steepness and so on, the properties of the asymptotic models for water waves  have been extensively studied in the last years, through \eqref{kaw}, to understand the full water wave system \footnote{ See for instance  \cite{ASL, BLS, Lannes} and references therein, for a rigorous justification of various asymptotic models for surface and internal waves.}.

In some situations, we can formulate the waves as a free boundary problem of the \linebreak incompressible, irrotational Euler equation in an appropriate non-dimensional form with at least two (non-dimensional) parameters $\delta := \frac{h}{\lambda}$ and $\varepsilon := \frac{a}{h}$, where the water depth, the wavelength and the amplitude of the free surface are parameterized as $h, \lambda$ and $a$, respectively.  In turn, if we introduce another non-dimensional parameter $\mu$, so-called the Bond number, which measures the importance of gravitational forces compared to surface tension forces, then the physical condition $\delta \ll 1$ characterizes the waves, which are called long waves or shallow water waves.

On the other hand, there are several long wave approximations depending on the relations between $\varepsilon$ and $\delta$.  For instance, if we consider $\varepsilon = \delta^4 \ll 1$ and $\mu = \frac13 + \nu\varepsilon^{\frac12},$ and in connection with the  critical Bond number $\mu = \frac13$,  we have the so-called Kawahara equation, represented by \eqref{kaw}, and derived by Hasimoto and Kawahara in \cite{Hasimoto1970,Kawahara}.

The main concern of this paper is to deal with the Kawahara equation in a bounded domain under the action of a time-delayed boundary control, namely
\begin{equation}\label{sis1}
 \left\{
 \begin{array}{ll}
\partial_{t} u(t,x)+a \partial_x u(t,x) +b\partial_x^3 u(t,x)- \partial_x^5 u(t,x) + u(t,x)^{p} \partial_x u(t,x)=0, & (t,x) \in  \mathbb{R}^{+}\times \Omega,\\
u (t,0) =u (t,L) =\partial_x u(t,0)=\partial_x u(t,L) =0, & t >0,\\
\partial_x^2 u(t,L)=\mathcal{F}(t,h), & t >0,\\
\partial_x^{2}u(t,0)=z_0(t), &  \; t \in  \mathcal{T}, \\
u(0,x) =u_{0} (x), & x \in  \Omega.
\end{array}
\right.
\end{equation}
In \eqref{sis1}, $\Omega=(0,L)$, where $L > 0$,  while $a >0$ and $b>0$ are physical parameters. Moreover, $p \in [1,2]$ and  $\mathcal{F}(t,h)$ is the delayed control given by
\begin{equation}\label{F}
\mathcal{F}(t)=\alpha \partial_x^2 u(t,0)+\beta \partial_x^2 u(t-h,0),
\end{equation}
in which $h > 0$ is the time-delay,   $\alpha$ and $ \beta$ are two feedback gains satisfying the restriction
\begin{equation}\label{ab}
|\alpha| + |\beta| <1.
\end{equation}
Finally, $\mathcal{T}=(-h,0)$, while $u_0$ and $z_0$ are initial conditions.

Thereafter, the functional energy associated to the system \eqref{sis1}-\eqref{F} is
\begin{equation}\label{energia}
    E(t)= \int_{0}^{L}u^2 (t,x)dx + h|\beta|\int_{0}^{1}(\partial_x^{2}u(t -h\rho,0))^{2}d\rho, \ t \geq 0.
\end{equation}
Now, recall that if $\alpha=\beta=0$, then the term $\partial_x^2 u(t,0)$ represents a feedback damping mechanism (see for instance \cite{ara,vasi2}) but an extra internal damping is required to achieve the stability of the solutions. Therefore, taking into account the action of the time-delayed boundary control \eqref{F} in \eqref{sis1}, the following issue will be dealt with in this article:

\vspace{0.1cm}

\textit{Does $E(t)\longrightarrow0$, as $t\to\infty$? If it is the case, can we provide a decay rate?}

\subsection{Historical background} Let us first present a review of the main  results available in the literature for the analysis of the Kawahara equation in a bounded interval. A pioneer work is due to Silva and Vasconcellos \cite{vasi1,vasi2}, where the authors studied the stabilization of global solutions of the linear Kawahara equation in a bounded interval under the effect of a localized damping mechanism.  The second endeavor, in this line, is completed by Capistrano-Filho \textit{et al.} \cite{ara}, where the generalized Kawahara equation in a bounded domain $Q_T = (0, T) \times (0,L)$ is considered
\begin{equation}\label{int1}
\left\lbrace
\begin{array}{llr}
\partial_t u + \partial_x u +\partial^3_x u - \partial^5_x u+u^p \partial_x u +a(x)u= 0, & \mbox{in} \ Q_{T}, \\
u(t,0)=u(t,L)=  \partial_x u(t,0)=\partial_x u (t,L)=\partial^2_xu (t,L) = 0, & \mbox{on} \ [0,T],\\
 u(0,x) = u_{0}(x), & \mbox{in} \ [0,L],
\end{array}\right.
\end{equation}
with $p\in [1,4)$. It is proven that the  solutions of the above system decays exponentially.

The internal controllability problem has been tackled by Chen \cite{MoChen} for the Kawahara equation with homogeneous boundary conditions. Using Carleman estimates associated to the linear operator of the Kawahara equation with an internal observation, a null controllable result is shown when the internal control is effective in a subdomain $\omega\subset(0,L)$.  In \cite{CaGo},  considering the system \eqref{int1} with an internal control $f(t,x)$ and homogeneous boundary conditions, the equation is exact shown to be controllable in $L^2$-weighted Sobolev spaces and, additionally,  controllable by regions in $L^2$-Sobolev space.

Recently,  a new tool for the control properties for the Kawahara operator was proposed in \cite{CaSo, CaSoGa}.  First, in \cite{CaSo}, the authors showed a new type of controllability for the Kawahara equation, what they called \textit{overdetermination control problem}. A boundary control is designed so that the solution of the problem under consideration satisfies an integral condition. Furthermore, when the control acts internally in the system, instead of the boundary, the authors proved that this integral condition is also satisfied.  After that, in \cite{CaSoGa}, the authors extend this idea to the internal control problem for the Kawahara equation on unbounded domains.  Precisely, under certain hypotheses over the initial and boundary data,  an internal control input is designed so that the solutions of the Kawahara equation satisfies an integral overdetermination condition, whether the Kawahara equation is posed in the real line, left half-line or right half-line.  We also note that the existence and uniqueness of solutions as well their stability are investigated for the Kawahara type equation posed in the whole real line \cite{coc1,coc2,coc3,cui,isa}, the half-line \cite{dor1, lar2}, a periodic domain \cite{hir, kat}, and a non-periodic bounded domain \cite{dor2, dor3, lar1, lar2}.

We conclude the literature review by mentioning the last works on the stabilization of the Kawahara equation with a localized time-delayed interior control. In \cite{CaVi, boumediene}, under suitable assumptions on the time delay coefficients, the authors are able to prove that solutions of the Kawahara system are exponentially stable. The results are obtained using Lyapunov approach and a compactness-uniqueness argument.

\subsection{Notations and main results} First of all, let us introduce the following notations that we will use throughout this manuscript.
\begin{itemize}
	\item[(i)] We consider the space of solutions
	\begin{equation*}
		X(Q_{T})= C(0,T;L^{2}(0,L))\cap L^{2}(0,T; H^{2}(0,L))
	\end{equation*}
equipped with the norm
\begin{equation*}
\|v\|_{X(Q_{T})}= \max_{t \in (0,T)}\|v(t,\cdot )\|_{L^{2}(0,L)} + \biggl(\int_{0}^{T}\|v(t,\cdot )\|^{2}_{H^{2}(0,L)}dt\biggr)^{\frac{1}{2}}.
\end{equation*}

	\item[(ii)] Denote by $$\tilde{H}= L^{2}(0,L)\times
	 L^{2}(-h,0)$$ the Hilbert space equipped with the inner product
	 \begin{equation*}
 \langle (u_1,z_1),(u_2,z_2)\rangle_{\tilde{H}}= \int_{0}^{L}u_{1}u_{2}dx + |\beta|\int_{-h}^{0}z_{1}(s)z_{2}(s)~ds,
	 \end{equation*}
	which yields the following norm
	$$\|(u,z)\|^{2}_{\tilde{H}} = \int_{0}^{L}u^2(x) dx + |\beta|\int_{-h}^{0}z^2(\rho) d\rho.$$


\item[(iii)] Throughout all the manuscript, $(\cdot,\cdot)_{\mathbb{R}^{2}}$ denotes the canonical inner product of $\mathbb{R}^{2}$.
\end{itemize}
With the above notations in hand,  let us state our first main result in this article:

\begin{theorem}\label{Lyapunov}Let $\alpha\neq0$ and $\beta\neq0$ be two real constants satisfying \eqref{ab} and suppose that the spatial length $L$ fulfills
\begin{equation}\label{L}
0< L < \sqrt{\frac{3b}{a}}\pi.
\end{equation}
Then,  there exists $r>0$ sufficiently small, such that for every $(u_{0}, z_{0})\in H$ with $\|(u_{0}, z_{0})\|_{H} < r$, the energy of the system \eqref{sis1}-\eqref{F}, denoted by $E$ and defined by \eqref{energia} exponentially decays, that is,  there exist two positive constants $\kappa$ and $\lambda$ such that
\begin{equation}\label{exp decay}
	E(t) \leq \kappa E(0)e^{-2\lambda t}, \ t > 0.
\end{equation}
Here,
\begin{equation}\label{lambda}
	\lambda \leq \min\left\{ \frac{\mu_{2}}{2h(\mu_{2} + |\beta|)} ,\frac{3b\pi^2-r^2L-L^2a}{2L^2(1+L\mu_1)}\mu_1\right\}
\end{equation}
and
\begin{equation*}
	\kappa \leq \biggl(1 + \max\left\{L\mu_{1}, \frac{\mu_{2}}{|\beta|}\right\}\biggr),
\end{equation*}
for $\mu_1,\mu_2\in(0,1)$ sufficiently small.
\end{theorem}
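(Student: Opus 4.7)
The plan is to reduce \eqref{sis1} to an equivalent system without explicit delay through the classical augmentation $z(t,\rho) := \partial_x^2 u(t-h\rho, 0)$ for $\rho \in (0,1)$, so that $z$ obeys the transport equation $h \partial_t z + \partial_\rho z = 0$ with $z(t,0) = \partial_x^2 u(t,0)$, and the boundary control becomes $\partial_x^2 u(t,L) = \alpha z(t,0) + \beta z(t,1)$. Multiplying the PDE by $u$, integrating over $(0,L)$, and using $u(t,0)=u(t,L)=0$ (so that the nonlinear contribution vanishes), the boundary residues collapse into a quadratic form in $z(t,0)$ and $\alpha z(t,0) + \beta z(t,1)$. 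Differentiating the delay integral in $E$ through the transport equation adds $\tfrac{|\beta|}{2}(z(t,0)^2 - z(t,1)^2)$. Expanding the square, controlling the cross product by Young's inequality, and invoking $|\alpha|+|\beta|<1$ yield $E'(t) \leq -c_0(z(t,0)^2 + z(t,1)^2)$ for some $c_0 > 0$. This dissipation by itself does not control $\|u\|_{L^2}^2$.

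To inject coercivity in $u$, I would introduce the Lyapunov functional $V(t) := E(t) + \mu_1 V_1(t) + \mu_2 V_2(t)$, with the Morawetz-type corrector $V_1(t) := \int_0^L x\, u^2(t,x)\, dx$ and the standard delay corrector $V_2(t) := h|\beta| \int_0^1 (1-\rho)\, z^2(t,\rho)\, d\rho$, where $\mu_1, \mu_2 \in (0,1)$ are to be tuned; for $\mu_1, \mu_2$ small one has $V \sim E$ with the constant $\kappa$ displayed in the statement. Differentiating $V_1$ along trajectories by multiplying the PDE by $xu$ and integrating by parts iteratively to absorb $\partial_x^5 u$ and $\partial_x^3 u$, one arrives at
\begin{equation*}
V_1'(t) \;=\; -\tfrac{3b}{2}\int_0^L (\partial_x u)^2\, dx + \tfrac{a}{2}\int_0^L u^2\, dx + (\text{boundary residues in } z(t,0)^2) - \tfrac{1}{p+2}\int_0^L u^{p+2}\, dx.
\end{equation*}
The Poincaré inequality $\pi^2 \|u\|_{L^2}^2 \leq L^2 \|\partial_x u\|_{L^2}^2$ (valid for $u \in H^1_0(0,L)$) turns the leading quadratic part into $-\tfrac{3b\pi^2 - aL^2}{2L^2}\int_0^L u^2\,dx$, strictly negative precisely under \eqref{L}. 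For the delay corrector, using $hz_t = -z_\rho$ and integrating by parts in $\rho$ gives $V_2'(t) = |\beta| z(t,0)^2 - |\beta|\int_0^1 z^2(t,\rho)\,d\rho$; the negative integral is directly comparable to $V_2(t)$, and the comparison ratio explains the factor $\mu_2/[2h(\mu_2+|\beta|)]$ appearing in \eqref{lambda}.

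The estimate then closes as follows. The parameter $\mu_1$ is chosen small enough that the $z(t,0)^2$-residues generated by $\mu_1 V_1'$ and by $\mu_2 V_2'$ are absorbed by the $-c_0 z(t,0)^2$ coming from $E'$; the parameter $\mu_2$ is calibrated so that the spatial and delay components decay at matching rates, which is exactly why $\lambda$ in \eqref{lambda} is written as a minimum of two quantities. The nonlinear remainder is controlled using the embedding $H^1 \hookrightarrow L^\infty$ together with the a priori bound $\|u(t)\|_{L^2}^2 \leq E(t) \leq E(0) \leq C r^2$ (from the decay of $E$ and the smallness hypothesis), producing an $O(r^2)$ perturbation of the linear coercivity and accounting for the correction $-r^2 L$ in the numerator of \eqref{lambda}. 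Assembling the three contributions gives $V'(t) \leq -2\lambda V(t)$, whence \eqref{exp decay} via the equivalence $V \sim E$. The main obstacle I anticipate is the careful bookkeeping of boundary contributions in the iterated integrations by parts for $V_1'$: every non-coercive boundary term must be matched against an available negative quantity from $E'$, from $\mu_2 V_2'$, or from the smallness of $r$, and one must simultaneously verify that $L < \sqrt{3b/a}\,\pi$ is sharp for the Poincaré-based comparison and cannot be relaxed within this multiplier framework.
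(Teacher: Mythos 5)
Your proposal follows essentially the same route as the paper's proof: the same transport-equation augmentation $z(t,\rho)=\partial_x^2u(t-h\rho,0)$, the same Lyapunov functional $V=E+\mu_1\int_0^L xu^2\,dx+\mu_2\,h\int_0^1(1-\rho)z^2\,d\rho$, the boundary residues absorbed through the strict dissipation of $E$ (the paper phrases this as negative definiteness of a perturbed $2\times 2$ matrix $M_{\mu_1}^{\mu_2}$, you via Young's inequality --- equivalent), the Poincar\'e comparison producing the restriction $L<\sqrt{3b/a}\,\pi$, and the nonlinear term bounded by $O(r^2)\|\partial_x u\|_{L^2(0,L)}^2$ using $H^1\hookrightarrow L^\infty$ together with the non-increasing energy, closing with $V'+2\lambda V\leq 0$ and the equivalence $V\sim E$. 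The only slip is the sign of the nonlinearity in your displayed formula for $V_1'$ (multiplying $u^p\partial_x u$ by $xu$ and integrating by parts yields $+\tfrac{1}{p+2}\int_0^L u^{p+2}\,dx$, a nonnegative contribution when $p=2$, not a negative one), but since you treat it as a remainder to be absorbed by smallness of $r$ rather than discarding it by sign, the argument is unaffected.
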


The second main result gives a second answer for the question presented in this introduction. Indeed, using a different approach based on an observability inequality, we are able to highlight the  critical lengths phenomenon observed in \cite{ara} for the Kawahara equation:

\begin{theorem}\label{main2}
Assume that  $\alpha$ and $\beta$ satisfy \eqref{ab}, whereas $L>0$ is taken so that the problem $(\NN)$ (see Lemma \ref{lem2}) has only the trivial solution. Then, there exists $r>0$ such that for every $\left(u_{0}, z_{0}\right) \in H$ satisfying
$$
\left\|\left(u_{0}, z_{0}\right)\right\|_{H} \leq r,
$$
the energy of system \eqref{sis1}-\eqref{F}, denoted by $E$ and defined by \eqref{energia}, decays exponentially.  More precisely, there exist two positive constants $\nu$ and $\kappa$ such that
$$
E(t) \leq \kappa E(0) e^{-\nu t}, \quad t>0.
$$
\end{theorem}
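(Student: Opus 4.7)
The plan is to establish exponential decay via the classical compactness-uniqueness scheme: derive a basic energy dissipation, prove an observability inequality on a finite time horizon, and combine the two through a semigroup argument. The observability is the non-trivial step and is obtained by contradiction, with the hypothesis on the problem $(\NN)$ ruling out non-trivial limits.

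First, I would verify the dissipation identity. Introducing the transport variable $z(t,\rho)=\partial_x^2 u(t-h\rho,0)$, so that $h\partial_t z + \partial_\rho z = 0$ with $z(t,0)=\partial_x^2 u(t,0)$ and $z(t,1)=\partial_x^2 u(t-h,0)$, a multiplication of \eqref{sis1} by $u$ followed by integrations by parts (using the Dirichlet and Neumann boundary data, the boundary condition \eqref{F}, and the smallness of $r$ to control the nonlinear term) leads to an inequality of the form
\begin{equation*}
E(t_2)-E(t_1)\leq -\kappa_1\int_{t_1}^{t_2}\bigl(\partial_x^2 u(t,0)\bigr)^2\,dt-\kappa_2\int_{t_1}^{t_2}\bigl(\partial_x^2 u(t-h,0)\bigr)^2\,dt,
\end{equation*}
with $\kappa_1,\kappa_2>0$ guaranteed by \eqref{ab}. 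Hence $E$ is non-increasing and the two boundary traces at $x=0$ supply quantitative dissipation.

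The heart of the proof is the observability inequality: there exist $T>h$ and $C=C(T,L)>0$ such that
\begin{equation*}
E(0)\leq C\int_0^T\!\Bigl[\bigl(\partial_x^2 u(t,0)\bigr)^2+\bigl(\partial_x^2 u(t-h,0)\bigr)^2\Bigr]dt,
\end{equation*}
for every mild solution of \eqref{sis1}--\eqref{F} with $\|(u_0,z_0)\|_H\leq r$. Combined with the dissipation step, this yields $E(T)\leq\theta E(0)$ for some $\theta\in(0,1)$, and the semigroup property then gives $E(nT)\leq\theta^n E(0)$, which is the announced exponential decay with $\nu=-\frac{\log\theta}{T}$ and an explicit multiplicative constant $\kappa$.

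To establish the observability inequality, I argue by contradiction. Assume there is a sequence of solutions $(u^n,z^n)$ with $E_n(0)=1$ but whose boundary trace integral over $(0,T)$ tends to zero. The a priori estimates in $X(Q_T)$ (inherited from the well-posedness theory recalled earlier) give uniform bounds, so up to subsequence $u^n\rightharpoonup u$ in $L^2(0,T;H^2(0,L))$; Aubin--Lions upgrades this to strong convergence in $L^2(Q_T)$, which together with $r\ll 1$ allows the nonlinear term $(u^n)^p\partial_x u^n$ to be passed to the limit. The limit $(u,z)$ solves the linear Kawahara system with $\partial_x^2 u(\cdot,0)\equiv 0$ on $(0,T)$ and, by \eqref{F}, also $\partial_x^2 u(\cdot,L)\equiv 0$ on $(h,T)$. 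Laplace-transforming in time (or projecting onto the spectral modes of the linear operator) reduces the limit to the stationary spectral problem $(\NN)$ of Lemma~\ref{lem2}; by the standing assumption on $L$, the only solution is the trivial one, so $u\equiv 0$ on a subinterval, and then by backward uniqueness for the transport, $z\equiv 0$ as well. This contradicts the normalization $E_n(0)=1$ once it is shown that strong convergence in $H$ holds for the traces (obtained from a multiplier/Morawetz-type identity adapted to the fifth-order operator).

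The main obstacle will be the passage to the limit in Step~3: one must upgrade weak to strong convergence in a topology strong enough to retain the boundary traces $\partial_x^2 u^n(\cdot,0)$ and the delay integral, and at the same time reach the eigenvalue problem $(\NN)$ exactly in the form needed to invoke its triviality. A secondary subtlety is a case-splitting in the contradiction argument: if $\|u^n_0\|_{L^2}\to 0$ while the delay component carries all the normalized energy, the transport equation for $z^n$ must be exploited separately (the vanishing trace at $\rho=0$ propagates to $\rho=1$) to still arrive at $z\equiv 0$ and obtain the contradiction.
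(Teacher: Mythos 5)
Your overall scheme (dissipation $+$ observability $+$ semigroup iteration) is the right family of ideas, but the way you set up the observability step contains a genuine gap, and it is precisely the point where the paper takes a different road. You claim an observability inequality directly for the \emph{nonlinear} system \eqref{sis1}--\eqref{F} and start the contradiction argument with solutions normalized by $E_n(0)=1$. This normalization is incompatible with the standing smallness hypothesis, since $E(0)=\|(u_0,z_0)\|_H^2\le r^2<1$; to make the argument meaningful you would have to rescale, $v^n=u^n/\lambda_n$ with $\lambda_n=E_n(0)^{1/2}\to\lambda\ge 0$, and then the equation satisfied by $v^n$ carries the nonlinearity $\lambda_n^{p}(v^n)^p\partial_x v^n$. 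If $\lambda>0$ the limit problem is not the linear Kawahara equation, and your key step --- ``Laplace-transforming reduces the limit to the stationary problem $(\NN)$'' --- no longer applies: the hypothesis that $(\NN)$ has only the trivial solution is a statement about the \emph{linear} spectral problem, and invoking it for a nonlinear (or variable-coefficient) limit would require a genuinely different unique continuation property that you do not prove. Even in the favourable case $\lambda=0$, the reduction of a time-dependent solution with vanishing traces to the eigenvalue problem $(\NN)$ is not automatic: one must show (as in Rosier's argument, reproduced in Lemma \ref{SP}) that the space $N_T$ of such initial states is finite dimensional and invariant under the generator, so that it contains an eigenfunction; a Laplace transform or ``projection onto spectral modes'' is not by itself a proof.

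For comparison, the paper avoids nonlinear unique continuation altogether. It proves the observability inequality only for the linear system \eqref{2.1} (Proposition \ref{OI_prop}), via the compactness--uniqueness argument and Lemmas \ref{SP} and \ref{lem2}, obtaining $E(T)\le\gamma E(0)$ with $\gamma<1$ for the linear semigroup as in \eqref{semigroup}. The nonlinear theorem is then obtained perturbatively: the solution is split as $u=u^1+u^2$, where $u^1$ solves the linear system with the given data and $u^2$ solves the linear system with zero data and source $-u^2\partial_x u$; Proposition \ref{source term} and Lemma \ref{uux} give $\|(u(T),z(T))\|_H\le\gamma\|(u_0,z_0)\|_H+C\|u\|^2_{L^2(0,T;H^2(0,L))}$, and a multiplier estimate (multiplication by $xu$, Gagliardo--Nirenberg for the quartic term) shows $\|u\|^2_{L^2(0,T;H^2(0,L))}\le C\bigl(\|(u_0,z_0)\|_H^2+\|(u_0,z_0)\|_H^6\bigr)$, whence $\|(u(T),z(T))\|_H\le(\gamma+Cr+Cr^5)\|(u_0,z_0)\|_H$ and the decay follows by iteration. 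If you want to keep your direct nonlinear-observability route, you must either carry out the rescaled contradiction argument with a unique continuation result valid for the limit equation with a potential term, or adopt the paper's linear-plus-perturbation structure; as written, the proposal does not close.
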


\subsection{Heuristic of the article and its structure}
In this article, we are able to prove that the Kawahara system \eqref{sis1} is exponentially stable when we consider a  boundary time-delayed control $\mathcal{F}(t)$ defined by \eqref{F}.

In order to prove Theorem \ref{Lyapunov}, we use the idea of the work that treated the delayed wave systems \cite{xyl} (see also \cite{np}).  More precisely, choosing an appropriate Lyapunov functional associated to the solutions of \eqref{sis1}-\eqref{F} and with some restrictions on the spatial length $L$ and an appropriate size of the initial data, that is,  $L$ bounded as in \eqref{L} and  $$\|(u_{0}, z_{0})\|_{H} < \frac{2}{\pi}\sqrt{\frac{3b\pi^2-L^2 a}{L}},$$ the energy \eqref{energia} decays exponentially.   The key idea of this analysis is the relation between the linearized system associated with \eqref{sis1}-\eqref{F} and a transport equation (see the Section \ref{Sec2} for more details).

With regard to the proof of Theorem \ref{main2}, we proceed as in \cite{ro}, i.e, combining multipliers and compactness arguments which reduces the problem to show a unique continuation result for the state operator. To prove the latter, we extend the solution under consideration by zero in $\mathbb{R}\setminus[0,L]$ and take the Fourier transform. However, due to the complexity of the system,  after taking the Fourier transform of the extended solution $u$,  it is not possible to use the same techniques used in \cite{ro}. Thus, to prove our main result we invoke the result due Santos \textit{et al.} \cite{santos}. Specifically, after taking the Fourier transform, the issue is to establish when a certain quotient of entire functions still turns out to be an entire function. We then pick a polynomial $q: \mathbb{C}\to\mathbb{C}$ and a family of functions
\begin{equation}\label{N}
N_{\alpha}: \mathbb{C}\times(0,\infty)\to\mathbb{C},
\end{equation}
with $\alpha\in\mathbb{C}^4\setminus \{0\}$, whose restriction $N_{\alpha}(\cdot,L)$ is entire for each $L>0$. Next, we consider a family of functions $f_{\alpha}(\cdot,L)$, defined by
\begin{equation}\label{f}
f_{\alpha}(\mu,L)=\frac{N_{\alpha}(\mu,L)}{q(\mu)},
\end{equation}
in its maximal domain. The problem is then reduced to determine $L>0$ for which there exists $\alpha\in\mathbb{C}^4\setminus \{0\}$ such that $f_{\alpha}(\cdot,L)$ is entire. In contrast with the analysis developed in \cite{ro}, this approach does not provide an explicit characterization of a critical set, if it exists, but only ensures that the roots of $f$ have a relations with the M\"obius transform (see the proof of Lemma \ref{lem2} above).

\vspace{0.1cm}

Finally, let us present the outline of our work: First,  in the Section \ref{Sec2}, we prove regularity properties of the solutions to the linear system associated with \eqref{sis1}-\eqref{F} and then show that the well-posedness of the problem \eqref{sis1}-\eqref{F}. Section \ref{Sec3} is devoted to the proof of the first main result of this article, Theorem \ref{Lyapunov}.  In the Section \ref{Sec4},  with the help of the result established in \cite{santos},  we  show Theorem \ref{main2}.  Finally,  in the Section \ref{Sec5}, we present some additional comments and open questions.

\section{Well-posedness results}\label{Sec2}
The goal of this section is to prove that the full nonlinear Kawahara system \eqref{sis1}-\eqref{F} is well-posed. The proof is divided into four parts by using the strategy due to Rosier \cite{ro}:
\begin{enumerate}
\item Well-posedness to the linear system associated to \eqref{sis1}-\eqref{F};
\item Properties of regularity of the linear system associated to \eqref{sis1}-\eqref{F}.
\item Well-posedness of the linear system associated to \eqref{sis1}-\eqref{F} with a source term.
    \item Well-posedness of the system \eqref{sis1}-\eqref{F}.
\end{enumerate}

\subsection{Well-posedness: Linear system}
We begin by proving the well posedness of the linearized system
 \begin{equation}\label{2.1}
	\left\{
	\begin{array}{ll}
		\partial_{t} u(t,x)+a \partial_x u(t,x) +b\partial_x^3 u(t,x)- \partial_x^5 u(t,x) =0, & (t,x) \in  \mathbb{R}^{+}\times\Omega,\\
		u (t,0) =u (t,L) =\partial_x u(t,0)=\partial_x u(t,L) =0, & t >0,\\
		\partial_x^2 u(t,L)=\alpha \partial_x^2 u(t,0)+\beta \partial_x^2 u(t-h,0), & t >0,\\
		u(0,x) =u_{0} (x) , & x \in  \Omega.\\
	\end{array}
	\right.
\end{equation}
In order to investigate \eqref{2.1}, let $z(t,\rho)=  \partial_x^2u(t - \rho h,0)$, which satisfies the transport equation \cite{xyl} (see also \cite{np})
\begin{equation}\label{trans. eq}
\left\{
\begin{array}{ll}
		h\partial_{t}z(t,\rho) + \partial_{\rho}z(t,\rho)= 0, & \rho \in (0,1), \ t \ > 0, \\
		z(t,0)= \partial_x^{2}u(t,0), & t >0,\\
		z(0,\rho)= z_{0}(-h\rho), & \rho \in (0,1).
	\end{array}
\right.	
\end{equation}
Next, we consider the Hilbert space $H=L^2(0,L) \times L^2(0,1)$ equipped with the following inner product
\begin{equation*}
\langle (u_{1},z_{1}),(u_{2}, z_{2})\rangle_H = \int_{0}^{L}u_{1}u_{2}dx + |\beta|h\int_{0}^{1}z_{1}z_{2}d\rho.
\end{equation*}
Subsequently,  one can rewrite \eqref{2.1}-\eqref{trans. eq} as follows
\begin{equation}
\begin{cases}
U_t(t)=AU(t), \quad t>0,\\
U(0)=U_0\in H,
\end{cases}
\end{equation}
where
\begin{equation*}
	 A= \left[\begin{array}{lll}
		-a\partial_x - b\partial_x^3 + \partial_x^5 & 0 \\
		0 & -\frac{1}{h}\partial_{\rho}
	\end{array}\right], \\  U(t)= \left[\begin{array}{ll}
		u(t,\cdot) \\
	    z(t,\cdot)
	\end{array}\right],	U_{0}= \left[\begin{array}{ll}
	u_{0}(\cdot) \\
	z_{0}(-h(\cdot))
\end{array}\right]
\end{equation*}
and
\begin{equation*}
\begin{split}
	D(A)= \{(u,z) \in H^{5}(0,L)\times H^{1}(0,1); u(0)=u(L)=\partial_x u(0)=\partial_x u(L)=0,\\	
\quad \partial_x^{2}u(0)= z(0), \partial_x^{2}u(L)= \alpha\partial_x^{2}u(0) + \beta z(1)\}.
	\end{split}
\end{equation*}
The next result ensures the well-posedness for the problem \eqref{2.1}.

\begin{proposition}\label{linear}
Assume that the constants $\alpha$ and $\beta$ satisfy \eqref{ab} and that $U_0\in H. $ Then, there exists a unique mild solution $U\in C([0,+\infty),H)$ for the system \eqref{2.1}. Additionally, considering $U_0\in D(A)$, we have a classical solution with the following regularity
$$U\in C([+\infty),D(A))\cap C^1([0,+\infty),H).$$
\end{proposition}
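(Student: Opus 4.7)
My plan is to prove Proposition \ref{linear} by the semigroup approach due to Lumer--Phillips, following the strategy already announced in the excerpt (Rosier plus the delay-to-transport reformulation of \cite{xyl,np}). The system \eqref{2.1}--\eqref{trans. eq} has already been recast as the abstract Cauchy problem $U_t = AU$, $U(0)=U_0$, on the Hilbert space $H$, so the task reduces to showing that $A$ with the domain $D(A)$ given above generates a $C_0$-semigroup of contractions. Once this is done, standard semigroup theory yields $U\in C([0,\infty);H)$ for $U_0\in H$ and the extra regularity $U\in C([0,\infty);D(A))\cap C^1([0,\infty);H)$ for $U_0\in D(A)$.

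The first step is dissipativity. For $U=(u,z)\in D(A)$, I would compute $\langle AU,U\rangle_H$ by integration by parts on $(0,L)$ using the clamped boundary conditions $u(0)=u(L)=\partial_x u(0)=\partial_x u(L)=0$. The $-a\partial_x u$ term contributes zero, the $-b\partial_x^3 u$ term contributes a boundary term that vanishes because $\partial_x u$ vanishes at both endpoints, and the fifth-order term produces the trace expression $\tfrac12(\partial_x^2 u(L))^2-\tfrac12(\partial_x^2 u(0))^2$. The transport part contributes $-\tfrac{|\beta|}{2}(z(1)^2-z(0)^2)$. Substituting the coupling $\partial_x^2 u(0)=z(0)$ and $\partial_x^2 u(L)=\alpha z(0)+\beta z(1)$, one obtains
\begin{equation*}
\langle AU,U\rangle_H = \tfrac12\bigl((\alpha^2-1+|\beta|)z(0)^2+2\alpha\beta z(0)z(1)+(\beta^2-|\beta|)z(1)^2\bigr).
\end{equation*}
The associated $2\times 2$ matrix has negative diagonal entries (both strictly less than zero since $|\alpha|<1-|\beta|<1$) and determinant $|\beta|((1-|\beta|)^2-\alpha^2)=|\beta|(1-|\beta|-\alpha)(1-|\beta|+\alpha)$, which is strictly positive precisely because of the hypothesis $|\alpha|+|\beta|<1$. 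Hence the quadratic form is negative semi-definite and $\langle AU,U\rangle_H\leq 0$, i.e., $A$ is dissipative on $H$.

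The second step is to verify the range condition: there exists $\lambda_0>0$ such that $\lambda_0 I-A$ maps $D(A)$ onto $H$. Given $F=(f_1,f_2)\in H$, the $z$-component of $(\lambda_0 I-A)U=F$ is a linear first-order ODE $\lambda_0 z+\tfrac{1}{h}z_\rho=f_2$ with data $z(0)=\partial_x^2 u(0)$, which I would solve explicitly by the integrating-factor formula; this expresses $z$ (and in particular $z(1)$) affinely in terms of the unknown trace $\partial_x^2 u(0)$ and the datum $f_2$. Substituting into the coupling $\partial_x^2 u(L)=\alpha\partial_x^2 u(0)+\beta z(1)$ reduces the $u$-equation to a single fifth-order linear boundary-value problem
\begin{equation*}
-\partial_x^5 u+b\partial_x^3 u+a\partial_x u+\lambda_0 u=f_1 \quad\text{on }(0,L),
\end{equation*}
with the clamped conditions at $x=0,L$ and a mixed Robin-type coupling between $\partial_x^2 u(0)$ and $\partial_x^2 u(L)$. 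I would treat this via a variational formulation on the closed subspace $V=\{v\in H^2(0,L):v(0)=v(L)=\partial_x v(0)=\partial_x v(L)=0\}$, checking coercivity of the bilinear form for $\lambda_0$ chosen large enough (the uncontrolled boundary terms, after the explicit elimination of $z(1)$, are again controlled by the $|\alpha|+|\beta|<1$ condition exactly as above), then applying Lax--Milgram to get $u\in V$, and finally using interior elliptic regularity together with the ODE itself to promote this weak solution to $H^5(0,L)$ and to verify the compatibility conditions defining $D(A)$.

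The main obstacle I anticipate is the range step: pushing the Lax--Milgram output up to $H^5$ regularity while keeping track of the coupling that ties the two boundary traces of $\partial_x^2 u$ to the transport variable $z$, and absorbing the cross terms coming from $\alpha$ and $\beta$ into the coercive part using the Young-type inequalities justified by $|\alpha|+|\beta|<1$. Once dissipativity and the range condition are in hand, the Lumer--Phillips theorem gives that $A$ generates a contraction semigroup $\{S(t)\}_{t\geq 0}$ on $H$, and the asserted mild/classical regularity of $U(t)=S(t)U_0$ follows from the general theory.
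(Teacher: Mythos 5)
Your dissipativity computation for $A$ is exactly the paper's first step (same boundary-term bookkeeping, same matrix as in \eqref{Matrix A}, same use of \eqref{ab} through the sign of $m_{11}$ and of $\det M=|\beta|((|\beta|-1)^2-\alpha^2)$), so that half is fine. The genuine gap is in your maximality step. For the stationary problem $\lambda_0 u+au'+bu'''-u'''''=f_1$ the variational form you would obtain on $V=\{v\in H^2(0,L):v(0)=v(L)=v'(0)=v'(L)=0\}$ after integrating by parts is
\begin{equation*}
a(u,v)=\lambda_0\int_0^L uv\,dx+a\int_0^L u'v\,dx-b\int_0^L u''v'\,dx-\int_0^L u'''v''\,dx+\text{(boundary terms)},
\end{equation*}
which is not even defined for $u,v\in V$ (it needs three derivatives on one argument), and, more seriously, it is never coercive in the $H^2$ norm: since the operator has odd order, its principal part is skew-symmetric, so $a(u,u)$ reduces to $\lambda_0\|u\|_{L^2}^2$ plus boundary traces of $u''$ whose sign is controlled by \eqref{ab}. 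No choice of $\lambda_0$, however large, lets you absorb anything into an $H^2$-coercive part, because there is nothing of the form $\|u''\|_{L^2}^2$ in $a(u,u)$ to begin with. So Lax--Milgram on $V$, as you propose it, cannot close the range condition; at best one would need a Lions-type inf--sup argument with two different spaces or an elliptic regularization, neither of which you sketch, and the difficulty is not (as you anticipate) the $H^5$ bootstrap or the cross terms in $\alpha,\beta$.

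The paper avoids the range condition altogether: it computes the adjoint $A^*$ together with its domain $D(A^*)$ explicitly, shows by the same integration-by-parts computation that $\langle A^*V,V\rangle_H=\tfrac12(M^*\eta^*,\eta^*)_{\mathbb{R}^2}$ with $M^*$ as in \eqref{Matrix A*} negative definite under \eqref{ab} (Lemma \ref{lemmand}), and then invokes the standard criterion from semigroup theory \cite{paz}: a densely defined closed operator that is dissipative together with its adjoint generates a $C_0$-semigroup of contractions. If you replace your Lax--Milgram step by this duality argument (or, alternatively, by an honest solution of the fifth-order boundary value problem by a method adapted to odd-order operators), the rest of your outline — Lumer--Phillips style generation and the mild/classical regularity statements for $U_0\in H$ and $U_0\in D(A)$ — goes through as you describe.
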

\begin{proof}
As the proof uses standard arguments, only a sketch of it will be provided.  Let $U=(u,z)\in D(A).$ Then, integrating by parts and using the boundary conditions of   \eqref{2.1} and \eqref{trans. eq}, we obtain
\begin{equation}
\begin{split}
	\langle AU(t), U(t)\rangle_H=& \frac{1}{2}\left( \alpha^{2}(\partial_x^{2}u(t,0))^{2} + 2\alpha\beta\partial_x^{2}u(t,0)\partial_x^{2}u(t - h,0) \right)\\
		&+ \frac{1}{2}\left(	\beta^{2}(\partial_x^{2}u(t - h,0))^{2} -(\partial_x^{2}u(t,0))^{2}  \right)\\
		 &+\frac{1}{2}\left(-|\beta|(\partial_x^{2}u(t - h,0))^{2}  +|\beta|(\partial_x^{2}u(t,0))^{2}\right)=\frac{1}{2}(M\eta(t), \eta(t))_{\mathbb{R}^{2}},
	\end{split}
\label{n1}
\end{equation}
where
\begin{equation}\label{Matrix A}
\eta=\left[\begin{array}{ll}
		\partial_x^{2}u(t,0) \\
	   \partial_x^{2}u(t - h,0))
	\end{array}\right]
	\ \mbox{and} \ M= \left[\begin{array}{ll}
		\alpha^{2} - 1 + |\beta| & \alpha\beta \\
		\alpha\beta & \beta^{2} - |\beta|
	\end{array}\right].
\end{equation}

Now,  observe that for the Adjoint of $A$,  denoted by $A^*$, is defined by
\begin{equation*}
	A^{*}= \left[\begin{array}{ll}
		a\partial_x + b\partial_x^3 - \partial_x^5 & 0 \\
		0 & \frac{1}{h}\partial_{\rho}
	\end{array}\right]
\end{equation*}
with
\begin{equation*}
\begin{split}
		D(A^{*})=\{(\varphi,\psi) \in H^{5}(0,L)\times H^{1}(0,1): \varphi(0)=\varphi(L)=\partial_x \varphi(0)=\partial_x \varphi(L)=0, \\
		 \psi(1)=\frac{\beta}{|\beta|}\partial_x^{2}\varphi(L), \ \partial_x^{2}\varphi(0)= \alpha\partial_x^{2}\varphi(L) + |\beta|\psi(0)\}.
	\end{split}
\end{equation*}
Similarly, we have,  for $V= (\varphi, \psi) \in D(A^{*}),$  that
 \begin{equation*}
\begin{split}
 	\langle A^{*}V, V \rangle_H 	=& 	\frac{1}{2}\bigg[(\alpha^{2} - 1 + |\beta|^{2})\partial_x^{2}\varphi(L) + 2\alpha|\beta|\partial_x^{2}\varphi(L)\psi(0) +( |\beta|^{2} - |\beta|)\psi(0)^{2}\biggl]\\
 	= &\frac{1}{2}( M^{*}\eta^{*},\eta^{*})_{\mathbb{R}^{2}},
 	\end{split}
\label{n2}
 \end{equation*}
where
\begin{equation}\label{Matrix A*}
	\eta^{*}=\left[\begin{array}{ll}
\partial_x^{2}\varphi(L) \\
	  \psi(0)
	\end{array}\right]
	\ \mbox{and}
	 \ M^{*}= \left[\begin{array}{ll}
		\alpha^{2} - 1 + |\beta| & \alpha|\beta| \\
		\alpha|\beta| & \beta^{2} - |\beta|
	\end{array}\right].
\end{equation}
Now, let us check that $M$ and $M^*$ are negative definite. For this, we will use the following lemma:
\begin{lemma}\label{lemmand}
	Let $M=(m_{ij})_{i,j} \in \mathbb{M}_{2 \times 2}(\mathbb{R})$ be a symmetric matrix. If $m_{11}<0$ and $\det(M)>0$, then $M$ is negative definite.
\end{lemma}
\begin{proof}
	It is sufficient to note that for all $u=(x ~ y) \neq (0 ~ 0)$ we have
	\begin{align*}
		uMu^{\top}=&m_{11}x^2+2xym_{12}+m_{22}y^2
		=m_{11}\left(x+\frac{m_{12}}{m_{11}}y\right)^2+\left(\frac{m_{11}m_{22}-m_{12}^{2}}{m_{11}}\right)y^2<0,
	\end{align*}
	which completes the proof.
\end{proof}

Now, we are in position to finish the proof.  From \eqref{Matrix A}, \eqref{Matrix A*} and the condition \eqref{ab}, we see that $m_{11}=m^{*}_{11}=\alpha^2-1+|\beta|<0$ and $$\det M= \det M^{*}=|\beta|((|\beta| - 1)^{2} - \alpha^{2})>0,$$  where $M=(m_{ij})_{i,j \in \{1,2\}}$ and $M^{*}=(m^{*}_{i,j})_{i,j \in \{1,2\}}$. Therefore, by virtue of Lemma \ref{lemmand}, it follows that $M$ and $M^{*}$ are negative definite and hence both $A$ and $A^{*}$ are dissipative in view of \eqref{n1} and \eqref{n2}.

Finally,  since $A$ and $A^{*}$ are densely defined closed linear operators and both $A$ and $A^*$ are dissipative,  one can use semigroups theory of linear operators \cite{paz} to claim that $A$ is a generator of a $C_{0}$--semigroups of contractions on $H$, together with the statements of Proposition \ref{linear}.
\end{proof}

\begin{remark}
It is important to point out that considering  $\alpha= \beta= 0$ or $\alpha \neq 0$ and $\beta = 0$, the well posedness of \eqref{2.1} is easily obtained.  Indeed,  if  $\alpha= \beta= 0$, the result follows from \cite[Lemma 2.1]{ara}. In the case when $\alpha \neq 0$ and $\beta = 0$, we have $Au=-a\partial_{x}-b\partial_{x}^{3}u+\partial_{x}^{5}u$ with domain $$D(A)=\{u \in H^{5}(0,L): u(0)=u(L)=\partial_x u(0)=\partial_x u(L)=0,\partial_x^{2}u(L)= \alpha\partial_x^{2}u(0)\}.$$
It may be seen that $A^{*}v=a\partial_{x}v+b\partial_{x}^{3}v-\partial_{x}^{5}v$ with domain $$D(A^{*})=\{v \in H^{5}(0,L): v(0)=v(L)=\partial_x v(0)=\partial_x v(L)=0,\partial_x^{2}v(0)= \alpha\partial_x^{2}v(L)\}$$ and we easily verifies that $$(Au,u)_{L^2(0,L)}=\frac{(\alpha^2-1)}{2}(\partial_{x}^{2}u(0))^2 \quad \text{and} \quad (A^{*}v,v)_{L^2(0,L)}=\frac{(\alpha^2-1)}{2}(\partial_{x}^{2}v(L))^2,$$ so in this case, it is necessary to take $|\alpha| < 1$ in order to obtain the well posedness result.
\end{remark}

\subsection{Regularity estimates: Linear system}
In the sequel, let $\{S(t)\}_{t\geq0}$ be the semigroup of contractions associated with the operator $A$. We have some \textit{a priori} estimates and regularity estimates for the linear systems \eqref{2.1} and \eqref{trans. eq}.
\begin{proposition}\label{lienar1}
Suppose that \eqref{ab} holds. Then, the application
 \begin{equation}\label{SSSS}
	\begin{array}{lll}
		&\mathcal{S}:  H \longrightarrow X(Q_{T})\times C(0,T; L^{2}(0,1))\\
		&(u_{0},z_{0}(-h(\cdot))) \longmapsto S(\cdot)(u_{0},z_{0}(-h(\cdot)))
	\end{array}
\end{equation}
is well defined and continuous. Moreover, for every $(u_{0}(\cdot),z_{0}(-h(\cdot))) \in H,$ we have $$(\partial_x^{2}u(\cdot,0),z(\cdot,1)) \in L^{2}(0,T)\times L^{2}(0,T)$$ and the following estimates hold
\begin{equation}\label{2.14}
\|\partial_x^{2}u(\cdot,0)\|^{2}_{L^{2}(0,T)} + \|z(\cdot,1)\|^{2}_{L^{2}(0,T)} \leq C\bigl(\|u_{0}\|^{2}_{L^{2}(0,L)} + \|z_{0}(-h(\cdot))\|^{2}_{L^{2}(0,1)}\bigl),
\end{equation}
\begin{equation}\label{2.15}
 \|u_{0}\|^{2}_{L^{2}(0,L)} \leq \frac{1}{T}\|u\|^{2}_{L^{2}(0,T; L^{2}(0,L))} + \|\partial_x^{2}u(\cdot,0)\|^{2}_{L^{2}(0,T)},	
\end{equation}
and
\begin{equation}\label{2.16}
\|z_{0}(-h(\cdot))\|^{2}_{L^{2}(0,1)} \leq \|z(T,\cdot)\|^{2}_{L^{2}(0,1)} + \frac{1}{h}\|z(\cdot,1)\|^{2}_{L^{2}(0,T)}
\end{equation}
for some constant $C > 0$ that may depend of $a,b,\alpha, \beta, L, T$ and $h$.
\end{proposition}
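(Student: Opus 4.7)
The plan is to establish the proposition via the standard multiplier method in the spirit of Rosier \cite{ro}. Proposition \ref{linear} already supplies the solution $U=(u,z)\in C([0,T],H)$; what remains is to derive the trace estimates \eqref{2.14}--\eqref{2.16}, the Kato-type smoothing $u\in L^2(0,T;H^2(0,L))$, and the continuous dependence that together make $\mathcal{S}$ well-defined and continuous.

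First I would establish \eqref{2.14}. Multiplying the first equation in \eqref{2.1} by $u$ and integrating on $(0,L)$ with the four homogeneous boundary conditions, the $a\partial_x u$ and $b\partial_x^3 u$ contributions vanish, while the fifth-order term produces $\tfrac{1}{2}[(\partial_x^2 u(t,0))^2-(\partial_x^2 u(t,L))^2]$. Multiplying \eqref{trans. eq} by $|\beta|h\,z$ and integrating on $(0,1)$ yields a matching identity involving $z^2(t,0)=(\partial_x^2 u(t,0))^2$ and $z^2(t,1)$. Adding the two gives exactly $\tfrac{d}{dt}\|U(t)\|_H^2=(M\eta(t),\eta(t))_{\mathbb{R}^2}$ with $M$ and $\eta$ as in the proof of Proposition \ref{linear}. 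Since $M$ is negative definite under \eqref{ab}, there exists $c>0$ with $\tfrac{d}{dt}\|U(t)\|_H^2\leq -c[(\partial_x^2 u(t,0))^2+z^2(t,1)]$, and integrating on $(0,T)$ produces \eqref{2.14}.

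Next, for \eqref{2.15} I would use the multiplier $(T-t)u$ in \eqref{2.1}. Time integration converts the $u_t$ term into $\tfrac{T}{2}\|u_0\|_{L^2}^2-\tfrac{1}{2}\int_0^T\|u\|_{L^2}^2\,dt$, while the spatial integrations by parts are as above, producing
\begin{equation*}
\frac{T}{2}\|u_0\|_{L^2}^2 = \frac{1}{2}\int_0^T\|u(t)\|_{L^2}^2\,dt + \frac{1}{2}\int_0^T (T-t)\bigl[(\partial_x^2 u(t,0))^2 - (\partial_x^2 u(t,L))^2\bigr]dt.
\end{equation*}
Dropping the non-positive $(\partial_x^2 u(t,L))^2$ contribution and bounding $(T-t)\leq T$ yields \eqref{2.15}. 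Estimate \eqref{2.16} follows from the direct energy identity for the transport equation: multiplying \eqref{trans. eq} by $z$ and integrating on $(0,1)\times(0,T)$ gives
\begin{equation*}
\|z(T,\cdot)\|_{L^2(0,1)}^2 - \|z_0(-h\,\cdot)\|_{L^2(0,1)}^2 = \frac{1}{h}\int_0^T\bigl[z^2(t,0) - z^2(t,1)\bigr]dt,
\end{equation*}
and dropping the non-positive $z^2(t,0)$ term yields \eqref{2.16}.

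The remaining and most technical step is the smoothing effect $u\in L^2(0,T;H^2(0,L))$, needed to place the solution in $X(Q_T)$. I would apply the Kato-type multiplier $xu$ in \eqref{2.1}: careful integration by parts, using all four homogeneous boundary conditions, yields
\begin{equation*}
\frac{5}{2}\int_0^T\|\partial_x^2 u\|_{L^2}^2\,dt + \frac{3b}{2}\int_0^T\|\partial_x u\|_{L^2}^2\,dt = \frac{1}{2}\int_0^L x\,u_0^2\,dx - \frac{1}{2}\int_0^L x\,u^2(T,x)\,dx + \frac{a}{2}\int_0^T\|u\|^2\,dt + \frac{L}{2}\int_0^T(\partial_x^2 u(t,L))^2\,dt.
\end{equation*}
The delicate point will be the last boundary term: expanding $(\partial_x^2 u(t,L))^2=(\alpha\partial_x^2 u(t,0)+\beta z(t,1))^2$ and invoking \eqref{2.14} bounds it by $C\|U_0\|_H^2$, and the $L^2$ control from the first step handles the remaining right-hand side. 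This produces $\|u\|_{L^2(0,T;H^2)}\leq C\|U_0\|_H$; combined with the $C([0,T];L^2)$ continuity of the mild solution $u$ and of the transport component $z$ in $C([0,T];L^2(0,1))$ (both furnished by Proposition \ref{linear}), $\mathcal{S}$ is well-defined and continuous between the stated spaces. The principal obstacle is this Kato smoothing estimate, since the fifth-order contribution is not coercive on its own and only closes once the boundary remainder $(\partial_x^2 u(t,L))^2$ is absorbed through the trace estimate \eqref{2.14}.
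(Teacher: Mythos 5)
Your argument is correct and, for the smoothing estimate and for \eqref{2.15}--\eqref{2.16}, it follows the same multiplier route as the paper: the paper derives the master identities \eqref{Wz}--\eqref{gamma} and then chooses $\psi(t,x)=x$ for the Kato-type estimate (absorbing the boundary term $(\alpha\partial_x^2u(t,0)+\beta z(t,1))^2$ through \eqref{2.14} and \eqref{JB}, exactly as you propose), $\psi=T-t$ for \eqref{2.15}, and $\phi=1$ for \eqref{2.16}. The one genuine difference is your derivation of \eqref{2.14}: the paper first takes $\phi(t,\rho)=\rho$ in \eqref{Wz} and uses the contraction bound \eqref{VIP} to control $\|z(\cdot,1)\|_{L^2(0,T)}$ (estimate \eqref{1111}), then takes $\psi=1$ in \eqref{gamma} and absorbs the feedback term via $(\alpha A+\beta B)^2\le(\alpha^2+\beta^2)(A^2+B^2)$ together with $\alpha^2+\beta^2<1$, which follows from \eqref{ab}; you instead integrate the dissipation identity $\frac{d}{dt}\|U(t)\|_H^2=(M\eta(t),\eta(t))_{\mathbb{R}^2}$ from the proof of Proposition \ref{linear} and invoke the negative definiteness of $M$ in \eqref{Matrix A}. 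Your route is shorter, gives both trace terms simultaneously, and produces a constant independent of $T$, whereas the paper's route avoids quantifying the smallest eigenvalue of $-M$ at the price of a $(T+1)$ factor and the extra algebraic step. Two small caveats, common to both arguments, should be made explicit: the strict negative definiteness of $M$ (like the paper's division by $h|\beta|$ in \eqref{1111}) requires $\beta\neq 0$, which is implicit in the weighted space $H$; and the multiplier computations are justified for data in $D(A)$ and must then be extended to all of $H$ by density, a point the paper records at the end of its proof and which your write-up omits.
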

\begin{proof}
We split the proof in several steps.

\vspace{0.1cm}
\noindent{\textbf{Step 1.}}  \textit{Main identities.}
\vspace{0.1cm}

For every $(u_{0}, z_{0}(-h(\cdot))) \in H,$ the semigroups theory gives that $$S(\cdot)(u_{0}, z_{0}(-h(\cdot))) \in C(0,T; H)$$ and due to the fact that $A$ generates a $C_0$-semigroup of contractions, we have that
\begin{equation}\label{VIP}
\|u(t)\|^{2}_{L^{2}(0,L)} + h|\beta|\|z(t)\|^{2}_{L^{2}(0,1)}	\leq \|u_{0}\|^{2}_{L^{2}(0,L)} + h|\beta|\|z_{0}(-h(\cdot))\|^{2}_{L^{2}(0,1)}, \forall t \in [0,T] .
\end{equation}
Now, let $\phi \in C^{\infty}([0,1]\times[0,T]), \ \psi \in C^{\infty}([0,L]\times[0,T])$ and $(u, z) \in D(A).$ Then, multiplying \eqref{trans. eq} by $\phi z$ and \eqref{2.1} by $\psi u$, using integrations by parts and the initial conditions, we have
\begin{equation}\label{Wz}
\begin{split}
	\int_{0}^{1}[\phi(T,\rho)z(T,\rho)^{2} - \phi(0,\rho)z_{0}(-h\rho)^{2}]d\rho &- 	\frac{1}{h}\int_{0}^{T}\int_{0}^{1}[h\partial_{t}\phi(t,\rho) + \partial_{\rho}\phi(t,\rho)]z(t,\rho)^{2}d\rho dt\\
	&+	\frac{1}{h}\int_{0}^{T}[\phi(t,1)z(t,1)^{2} - \phi(t,0)(\partial_x^{2}u(t,0))^{2}]dt= 0
\end{split}
\end{equation}	
and
\begin{equation}\label{gamma}
\begin{split}
&	-\int_{0}^{T}\int_{0}^{L}[\partial_{t}\psi(t,x) + a\partial_x\psi(t,x) + b\partial_x^{3}\psi(t,x) - \partial_x^{5}\psi(t,x)]u^2 (t,x)dxdt \\
	& +
	3b\int_{0}^{T}\int_{0}^{L}\partial_x\psi(t,x)(\partial_xu(t,x))^{2}dxdt + \int_{0}^{L}[\psi(T,x)u^2 (t,x) - \psi(0,x)u_{0}(x)^{2}]dx\\
&	+
	5\int_{0}^{T}\int_{0}^{L}[\partial_x\psi(t,x)(\partial_x^{2}u(t,x))^{2} - \partial_x^{3}\psi(t,x)(\partial_xu(t,x))^{2}]dxdt\\
&	 -\int_{0}^{T}\psi(t,L)[\alpha \partial_x^{2}u(t,0) + \beta z(t,1)]^{2}dt + \int_{0}^{T}\psi(t,0)(\partial_x^{2}u(t,0))^{2}dt= 0.
	\end{split}
\end{equation}

\vspace{0.1cm}
\noindent{\textbf{Step 2.}} \textit{Proof of \eqref{2.14}.}
\vspace{0.1cm}

Let us pick $\phi(t,\rho)= \rho$ in \eqref{Wz} to get
\begin{equation*}
	\int_{0}^{1}(z(T,\rho)^{2} - z_{0}(-\rho h)^{2})\rho d\rho -\frac{1}{h}\int_{0}^{T}\int_{0}^{1}z(t,\rho)^{2} d\rho dt + \frac{1}{h}\int_{0}^{T}z(t,1)^{2}dt= 0.
\end{equation*}
Owing to \eqref{VIP}, the latter gives
\begin{equation}\label{1111}
	\|z(\cdot,1)\|_{L^{2}(0,T)}^{2} \leq (T+1)\left(1+\frac{1}{h|\beta|} \right)\biggl(\|u_{0}\|_{L^{2}(0,L)}^{2} + \|z_{0}(-h(\cdot))\|_{L^{2}(0,1)}^{2}\biggl).
\end{equation}
Now,  choosing $\psi(t,x)= 1$ in \eqref{gamma}  yields that
\begin{equation*}
	\int_{0}^{L}[u^2 (t,x) - u_{0}(x)^{2}]dx + \int_{0}^{T}(\partial_x^{2}u(t,0))^{2}dt - \int_{0}^{T}[\alpha \partial_x^{2}u(t,0)^{2} + \beta z(t,1)]^{2}dx= 0,
\end{equation*}
which implies
\begin{equation}\label{key}
	\int_{0}^{T}(\partial_x^{2}u(t,0))^{2}dt \leq \int_{0}^{T}(\alpha\partial_x^{2}u(t,0) + \beta z(t,1))^{2}dt + \|u_{0}\|^{2}_{L^{2}(0,L)}.
\end{equation}
Since
\begin{equation}\label{JB}
	(\alpha\partial_x^{2}u(t,0) + \beta z(t,1))^{2} \leq (\alpha^{2} + \beta^{2})((\partial_x^{2}u(t,0))^2 + (z(t,1))^{2}),
\end{equation}
it follows from \eqref{key} and \eqref{JB} that
\begin{equation*}
\int_{0}^{T}\bigl(1-(\alpha^{2} + \beta^{2}) \bigr)(\partial_x^{2}u(t,0))^{2}dt \leq \int_{0}^{T}(\alpha^{2} + \beta^{2})z(t,1)^{2}dt + \|u_{0}\|^{2}_{L^{2}(0,L)}.	
\end{equation*}
In view of \eqref{1111} and \eqref{ab}, the last estimate yields
\begin{equation}\label{Atil}
	\|\partial_x^{2}u(\cdot,0)\|_{L^{2}(0,T)}^2 \leq (T+1)\frac{1}{1-(\alpha^2+\beta^2)}\left(1+\frac{1}{h|\beta|} \right)\biggl(\|u_{0}\|^{2}_{L^{2}(0,L)} + \|z(-h(\cdot)))\|^{2}_{L^{2}(0,1)}\biggr).
\end{equation}
Combining \eqref{Atil} and \eqref{1111}, the estimate \eqref{2.14} follows.

\vspace{0.1cm}
\noindent{\textbf{Step 3.}} \textit{The map \eqref{SSSS} is well-defined and continuous.}
\vspace{0.1cm}

Letting $\psi(t,x) = x$  in \eqref{gamma} gives
\begin{equation*}
	\begin{split}
		-a\int_{0}^{T}\int_{0}^{L}u^2 (t,x)dxdt + 3b\int_{0}^{T}\int_{0}^{L}(\partial_xu(t,x))^{2}dxdt+5\int_{0}^{T}\int_{0}^{L}(\partial_x^{2}u(t,x))^{2}dx dt\\+
		\int_{0}^{L}x[u^2 (t,x) - u_{0}(x)^{2}]dx - L\int_{0}^{T}[\alpha\partial_x^{2}u(t,0) + \beta z(t,1)]^{2}dt= 0.
	\end{split}
\end{equation*}
which implies, using \eqref{VIP} and \eqref{JB},  that
\begin{equation*}
	\begin{split} 3b\int_{0}^{T}\int_{0}^{L}(\partial_xu(t,x))^{2}dxdt+&5\int_{0}^{T}\int_{0}^{L}(\partial_x^{2}u(t,x))^{2}dx dt \leq a\bigl(\|u_{0}\|_{L^{2}(0,L)}^{2} +  h|\beta|\|z_{0}(-h(\cdot))\|^{2}_{L^{2}(0,1)}\bigr) \\
		&+  L\|u_{0}\|_{L^{2}(0,L)}^{2}+L(\alpha^{2} + \beta^{2})\bigr(\|\partial_x^{2}u(\cdot,0)\|^{2}_{L^{2}(0,T)} + \|z(\cdot,1)\|^{2}_{L^{2}(0,T)}\bigl).
	\end{split}
\end{equation*}
In light of \eqref{2.14}, we  deduce that
\begin{equation}\label{Aux}
\begin{split}
			\|\partial_xu\|_{L^{2}(0,T;L^{2}(0,L))}^{2}&+\|\partial_{xx} u\|_{L^2(0,T,L^2(0,L))}^{2} \leq  \frac{a}{\min\{3b,5\}}\bigl(\|u_{0}\|_{L^{2}(0,L)}^{2} +  h|\beta|\|z_{0}(-h(\cdot))\|^{2}_{L^{2}(0,1)}\bigr) \\
		&+(T+1)\frac{2-(\alpha^2+\beta^2)}{1-(\alpha^2+\beta^2)}\left(1+\frac{1}{h|\beta|} \right)\frac{L}{\min\{3b,5\}}(\alpha^{2} + \beta^{2})\\
		&\times \bigr(\|u_{0}\|^{2}_{L^{2}(0,L)} + \|z_{0}(-h(\cdot))\|^{2}_{L^{2}(0,1)}\bigl) +  \frac{L}{\min\{3b,5\}}\|u_{0}\|_{L^{2}(0,L)}^{2} \\
		\leq &C_{0}(T+1)\bigr(\|u_{0}\|^{2}_{L^{2}(0,L)} + \|z_{0}(-h(\cdot))\|^{2}_{L^{2}(0,1)}\bigl),
		\end{split}
	\end{equation}
where $$C_{0}= \max\left\{\frac{a}{\min\{3b,5\}},\frac{a}{\min\{3b,5\}}|\beta|h, \left(\frac{2-(\alpha^2+\beta^2)}{1-(\alpha^2+\beta^2)}\left(1+\frac{1}{h|\beta|} \right)\frac{L}{\min\{3b,5\}}(\alpha^{2} + \beta^{2})\right) \right\}.$$
Combining \eqref{Aux} and \eqref{VIP}, we obtain the desired result.

\vspace{0.1cm}
\noindent{\textbf{Step 4.}} \textit{Proof of \eqref{2.15} and \eqref{2.16}.}
\vspace{0.1cm}

In order to show these inequalities, choose $\psi= T - t $ in \eqref{gamma} and $\phi(t,\rho)= 1$ in \eqref{Wz}, respectively.  Performing similar computations as we did in step 2, the result follows. Moreover, owing to the density of $D(A)$ in $H$,  the proof of Proposition \ref{lienar1} is achieved.
\end{proof}

\subsection{Well-posedness: Linear system with a source term}
Now we consider the linear system with a source term
\begin{equation}\label{with source term}
	\left\{
	\begin{array}{ll}
		\partial_{t} u(t,x)+a \partial_x u(t,x) +b\partial_x^3 u(t,x)- \partial_x^5 u(t,x) =f(t,x), & (t,x) \in  \mathbb{R}^{+}\times\Omega,\\
		u (t,0) =u (t,L) =\partial_x u(t,0)=\partial_x u(t,L) =0, & t >0,\\
		\partial_x^2 u(t,L)=\alpha \partial_x^2 u(t,0)+\beta \partial_x^2 u(t-h,0), & t >0,\\
		\partial_x^2 u(t,0)=z_{0}(t), & t >0,\\
		u(0,x) =u_{0} (x) , & x \in  \Omega.\\
	\end{array}
	\right.
\end{equation}
Then, we have the following result.
\begin{proposition}\label{source term}
Let $|\alpha| $ and $|\beta|$ satisfying \eqref{ab}.  For every $(u_{0}, z_{0}) \in H$ and $f \in L^{2}(0,T;L^{2}(0,L)),$ there exists a unique mild solution $(u, \partial_x^{2}u(t- h.,0)) \in X(Q_{T})\times C(0,T; L^{2}(0,1))$ to \eqref{with source term}.  Moreover, there exists a constant $C > 0$ such that
\begin{equation}
	\|(u,z)\|_{C(0,T; H)} \leq C\biggl(\|(u_{0}, z_{0}(-h(\cdot)))\|_{H} + \|f\|_{L^{1}(0,T;L^{2}(0,L))}\biggr)
\end{equation}
and
\begin{equation}
	\|\partial_x^{2}u\|^{2}_{L^{2}(0,T; L^{2}(0,L))} \leq C\biggl(\|(u_{0}, z_{0}(-h(\cdot)))\|^{2}_{H} + \|f\|_{L^{1}(0,T;L^{2}(0,L))}^{2}\biggr).
\end{equation}
\end{proposition}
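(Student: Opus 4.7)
The plan is to combine the semigroup $\{S(t)\}_{t\geq 0}$ from Proposition \ref{linear} with Duhamel's formula, and then upgrade the regularity via the same multiplier identities used in the proof of Proposition \ref{lienar1}, now accounting for the extra terms due to the source $f$. Since $f$ only enters the first component of $U=(u,z)$, I set $\tilde F(s) = (f(s,\cdot), 0) \in H$ and define
$$U(t) = S(t) U_0 + \int_0^t S(t-s)\tilde F(s)\,ds, \qquad U_0 = (u_0, z_0(-h\,\cdot)).$$
Uniqueness is immediate by linearity; existence in $C(0,T;H)$ together with the first estimate
$$\|U\|_{C(0,T;H)} \leq \|U_0\|_H + \|f\|_{L^1(0,T;L^2(0,L))}$$
follow from the contractivity $\|S(t)\|_{\mathcal{L}(H)} \leq 1$ combined with Minkowski's inequality in the integral term.

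For the $L^2(0,T;L^2(0,L))$ bound on $\partial_x^2 u$, I would reproduce, for smooth data, the multiplier computation of Step 3 in the proof of Proposition \ref{lienar1}. Applying $\psi(t,x) = x$ to the first equation of \eqref{with source term} and integrating by parts yields the analogue of identity \eqref{gamma} with the additional right-hand side term
$$2\int_0^T\int_0^L x f(t,x) u(t,x)\, dx\, dt,$$
which is bounded by $2L \|u\|_{C(0,T;L^2(0,L))} \|f\|_{L^1(0,T;L^2(0,L))}$ and absorbed using Young's inequality together with the $C(0,T;H)$ bound already established. Likewise, the basic energy identity \eqref{VIP} inherits a forcing term of the same form, controlled identically, while the trace bounds for $\partial_x^2 u(\cdot,0)$ and $z(\cdot,1)$ obtained in Step 2 of Proposition \ref{lienar1} transfer verbatim, up to remainders of the same type. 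Combined, these deliver
$$\|\partial_x^2 u\|_{L^2(0,T;L^2(0,L))}^2 \leq C\bigl(\|U_0\|_H^2 + \|f\|_{L^1(0,T;L^2(0,L))}^2\bigr).$$

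The main obstacle is purely bookkeeping rather than any fresh analytic difficulty: one must track the precise power of $\|f\|_{L^1(L^2)}$ appearing after each Young-type splitting so that the final estimate matches the stated inequality, and verify that none of the trace terms on the boundary inherit an uncontrolled dependence on $f$. Once these manipulations are justified for $U_0 \in D(A)$ and $f$ smooth, the general case $U_0 \in H$ and $f \in L^2(0,T;L^2(0,L)) \subset L^1(0,T;L^2(0,L))$ follows by a standard density argument based on continuity of both sides of the estimates in the data, completing the proof.
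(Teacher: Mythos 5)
Your argument is correct and is essentially the route the paper intends: the paper omits the proof, referring to \cite[Proposition 2]{BCV}, whose argument is exactly this combination of Duhamel's formula for the contraction semigroup of Proposition \ref{linear} with the multiplier identities of Proposition \ref{lienar1} (the choices $\psi=1$, $\psi=x$, $\phi=\rho$) adapted to carry the extra source terms $2\int_0^T\!\!\int_0^L \psi f u\,dx\,dt$, followed by a density argument. Your bookkeeping of the $f$-terms and the regularization step ($U_0\in D(A)$, smooth $f$) is the standard and correct way to justify those identities, so no gap remains.
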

\begin{proof}
This proof is analogous to that of \cite[Proposition 2]{BCV} and hence we omit it.
\end{proof}

\subsection{Well-posedness of the nonlinear system \eqref{sis1}-\eqref{F}}
Let us now prove that the system \eqref{sis1}-\eqref{F} is well-posed.  To do so, we first deal with the properties of the nonlinearities, through the following lemma.

\begin{lemma}\label{uux} Let $u \in L^{2}(0,T; H^{2}(0,L))= L^{2}(H^{2})$. Then, $u\partial_xu$ and $u^{2}\partial_xu$ belong to $L^{1}(0,T;L^{2}(0,L))$. Besides, there exists positives constants $C_{0}$ and $ C_{1}$, depending of $L$, such that for every $u, v \in  L^{2}(0,T; H^{2}(0,L))$, one has
	\begin{equation}\label{nonlin 1}
		\int_{0}^{T}\|u_{1}\partial_xu_{1} -  u_{2}\partial_xu_{2}\|_{L^{2}(0,L)}dt \leq C_{0}(\|u_{1}\|_{L^{2}(H^{2})} + \|u_{2}\|_{L^{2}(H^{2})})\|u_{1} -  u_{2}\|_{L^{2}(H^{2})}
	\end{equation}
and
	\begin{equation}\label{nonlin 2}
	\int_{0}^{T}\|u^{2}_{1}\partial_xu_{1} -  u^{2}_{2}\partial_xu_{2}\|_{L^{2}(0,L)}dt \leq C_{0}(1+T^\frac{1}{2})\left( \|u\|_{X(Q_T)}^{2}+\|v\|_{X(Q_T)}^{2}\right) \|u-v\|_{X(Q_T)}.
\end{equation}
\end{lemma}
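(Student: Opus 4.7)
\textbf{Proof plan for Lemma \ref{uux}.} The plan is to combine the Sobolev embedding $H^2(0,L)\hookrightarrow W^{1,\infty}(0,L)$ with Cauchy-Schwarz in time, together with the one-dimensional interpolation inequality
$$\|v\|_{L^\infty(0,L)}^2\le C\|v\|_{L^2(0,L)}\|v\|_{H^2(0,L)}$$
(valid on the bounded interval), which is essential for the cubic term. The algebraic identities
$$u_1\partial_xu_1-u_2\partial_xu_2=u_1\partial_x(u_1-u_2)+(u_1-u_2)\partial_xu_2,$$
$$u_1^2\partial_xu_1-u_2^2\partial_xu_2=u_1^2\partial_x(u_1-u_2)+(u_1-u_2)(u_1+u_2)\partial_xu_2$$
reduce both estimates to bounding products of three factors in the spatial variable.

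\emph{Membership and \eqref{nonlin 1}.} From $\|v\|_{L^\infty_x}\le C\|v\|_{H^2_x}$, each term in the first identity is bounded in $L^2_x$ by $C\|u_i\|_{H^2_x}\|u_1-u_2\|_{H^2_x}$. Setting $u_2=0$ shows $u\partial_xu\in L^1(0,T;L^2(0,L))$ whenever $u\in L^2(0,T;H^2(0,L))$, and Cauchy-Schwarz in $t$ applied to the full decomposition yields precisely
$$\int_0^T\|u_1\partial_xu_1-u_2\partial_xu_2\|_{L^2_x}\,dt\le C(\|u_1\|_{L^2(H^2)}+\|u_2\|_{L^2(H^2)})\|u_1-u_2\|_{L^2(H^2)},$$
which is \eqref{nonlin 1}.

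\emph{Bound \eqref{nonlin 2}.} For each spatial factor $\|u_i\|_{L^\infty_x}^2$ appearing after the cubic decomposition, I invoke the interpolation bound above to write $\|u_i\|_{L^\infty_x}^2\le C\|u_i\|_{L^2_x}\|u_i\|_{H^2_x}$. This trades one unit of $H^2$-regularity for one unit of $L^2$-regularity in the spatial variable, which is exactly what is needed to match the mixed structure of $X(Q_T)=C(0,T;L^2)\cap L^2(0,T;H^2)$. The $L^2_x$-factor is pulled out of the $t$-integral as $\sup_t\|u_i(t)\|_{L^2_x}\le\|u_i\|_{X(Q_T)}$, while the remaining product of $H^2_x$-norms in $t$ is closed by Cauchy-Schwarz, producing the $\|u_i\|_{L^2(H^2)}\,\|u_1-u_2\|_{L^2(H^2)}\le\|u_i\|_{X(Q_T)}\|u_1-u_2\|_{X(Q_T)}$ contribution. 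For those sub-terms of the cross product $(u_1-u_2)(u_1+u_2)\partial_xu_2$ whose remaining time-integrand lies in $L^\infty(0,T)$ rather than $L^2(0,T)$, a Hölder step against $1\in L^2(0,T)$ picks up the factor $T^{1/2}$, which is the source of the $(1+T^{1/2})$ prefactor in the statement.

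\emph{Main obstacle.} The principal technical challenge is the cubic estimate: because $X(Q_T)$ mixes $C(L^2)$ and $L^2(H^2)$, one must be disciplined about which of the three factors to estimate in which space at each step of the triple-product decomposition. A naive application of $H^2\hookrightarrow L^\infty$ on every factor would require $\|u_i\|_{L^\infty(0,T;H^2)}$-control, which is \emph{not} available; it is precisely the interpolation $\|v\|_{L^\infty_x}^2\le C\|v\|_{L^2_x}\|v\|_{H^2_x}$ that allows one unit of $H^2$-regularity to be exchanged for a $\sup_t\|\cdot(t)\|_{L^2}$, and thereby lets Cauchy-Schwarz in $t$ close the estimate in the cubic $\|\cdot\|_{X(Q_T)}^2\|\cdot\|_{X(Q_T)}$ form with at most an additional $T^{1/2}$ factor from a single Hölder step on the cross term.
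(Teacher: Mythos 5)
Your proposal is correct and follows essentially the same route as the paper: the same algebraic decomposition of the cubic difference, the one-dimensional interpolation bound $\|z\|_{L^\infty_x}^2\lesssim \|z\|_{L^2_x}\|z\|_{H^2_x}$ (the paper's \eqref{G3}--\eqref{G4}) to trade spatial regularity for the $\sup_t\|\cdot\|_{L^2}$ component of $X(Q_T)$, and Cauchy--Schwarz/H\"older in time producing the $(1+T^{1/2})$ factor. The only cosmetic difference is that you prove the quadratic estimate \eqref{nonlin 1} directly via $H^2(0,L)\hookrightarrow L^\infty(0,L)$, whereas the paper simply cites \cite[Lemma 2.1]{vasi} for it.
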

\begin{proof} Observe that \eqref{nonlin 1} follows from \cite[Lemma 2.1, p. 106]{vasi}.  Concerning \eqref{nonlin 2}, note that $$\sup_{x \in (0,L)}|u(x)^{2}| \leq \|u\|_{L^2(0,L)}^{2}+\|u\|_{L^2(0,L)}\|\partial_{x}u\|_{L^2(0,L)},$$
for $u \in H^1(0,L)$.  As we have that $ H^{2}(0,L) \hookrightarrow H^{1}(0,L) \hookrightarrow L^2(0,L)$, we get that
\begin{equation}\label{G3}
	z^2(x)-z^2(0)= \int_{0}^{x}(z^2(s))'ds 	=\int_{0}^{x}2z(s)z'(s)ds
	\leq 2 \|z\|_{L^2(0,L)}\|z'\|_{L^2(0,L)},
\end{equation}
for $u, v \in L^{2}(0,T; H^{2}(0,L))$. Consequently, in light of \eqref{G3}, we obtain
\begin{equation}\label{G4}
	\|z\|_{L^\infty(0,L)}^2 \leq 2 \|z\|_{L^2(0,L)}\|z'\|_{L^2(0,L)}.
\end{equation}
Let $u,z \in X(Q_T)$. We have
\begin{equation*}
\begin{split}
	\|u^2(\partial_{x}u-\partial_{x} v)\|_{L^1(0,T;L^2(0,L))}
	=& \int_{0}^{T}  \|u(t,\cdot)\|_{L^\infty(0,L)}^{2}\|(\partial_{x}u-\partial_{x} v)(t,\cdot)\|_{L^2(0,L)}dt\\
	\leq & \ T^{\frac{1}{2}}\|u\|_{L^\infty(0,T;L^2(0,L))}^{2} \|u-v\|_{L^2(0,T;H^2(0,L))}\\
	&+\|u\|_{L^\infty(0,T;L^2(0,L))}\|u\|_{L^2(0,T;H^2(0,L))}\|u-v\|_{L^2(0,T;H^2(0,L))}.
\end{split}
\end{equation*}
On the other hand, we have
\begin{equation*}
\begin{split}
\|(u^2-v^2) \partial_{x}v\|_{L^1(0,T;L^2(0,L))}=& \int_{0}^{T} \left(\int_{0}^{L}|u+v|^2|u-v|^2|\partial_{x}v|^2dx  \right)^{\frac{1}{2}}dt\\
\le& \int_{0}^{T} \left(\|(u+v)(t,\cdot)\|_{L^\infty(0,L)}^{2}\|(u-v)(t,\cdot)\|_{L^\infty(0,L)}^{2}\int_{0}^{L}|\partial_x v|^2dx  \right)^{\frac{1}{2}}dt\\
=& \int_{0}^{T} \|(u+v)(t,\cdot)\|_{L^\infty(0,L)}\|(u-v)(t,\cdot)\|_{L^\infty(0,L)} \|\partial_{x} v(t,\cdot)\|_{L^2(0,L)}dt.
\end{split}
\end{equation*}
Now, observe that
\begin{equation*}
\begin{split}
	&\|(u+v)(t,\cdot)\|_{L^\infty(0,L)}	\|(u-v)(t,\cdot)\|_{L^\infty(0,L)}\le\\
	&  \left(	\|(u+v)(t,\cdot)\|_{L^2(0,L)}+	\|(u+v)(t,\cdot)\|_{L^2(0,L)}^{\frac{1}{2}}\|(\partial_xu+\partial_x v)(t,\cdot)\|_{L^2(0,L)}^{\frac{1}{2}} \right)\\
	&\times \left(	\|(u-v)(t,\cdot)\|_{L^2(0,L)}+	\|(u-v)(t,\cdot)\|_{L^2(0,L)}^{\frac{1}{2}}\|(\partial_xu-\partial_x v)(t,\cdot)\|_{L^2(0,L)}^{\frac{1}{2}} \right)\\
	\leq & \|(u+v)(t,\cdot)\|_{L^2(0,L)}\|(u-v)(t,\cdot)\|_{L^2(0,L)}+\|(u+v)(t,\cdot)\|_{L^2(0,L)}\|(u-v)(t,\cdot)\|_{L^2(0,L)}\\
	&+\|(u+v)(t,\cdot)\|_{L^2(0,L)}\|(\partial_x u-\partial_x v)(t,\cdot)\|_{L^2(0,L)}+\|(u-v)(t,\cdot)\|_{L^2(0,L)}\|(u+v)(t,\cdot)\|_{L^2(0,L)}\\
	&+\|(u-v)(t,\cdot)\|_{L^2(0,L)}\|(\partial_x u+\partial_x v)(t,\cdot)\|_{L^2(0,L)}+\|(u+v)(t,\cdot)\|_{L^2(0,L)}\|(\partial_x u-\partial_x v)(t,\cdot)\|_{L^2(0,L)}\\
	&+\|(\partial_x u+\partial_x v)(t,\cdot)\|_{L^2(0,L)}\|(u-v)(t,\cdot)\|_{L^2(0,L)}.
\end{split}
\end{equation*}
Hence
\begin{equation*}
	\|u^2\partial_x u-v^2\partial_x v\|_{L^1(0,T;L^2(0,L))} \le (1+T^\frac{1}{2})\left( \|u\|_{X(Q_T)}^{2}+\|v\|_{X(Q_T)}^{2}\right) \|u-v\|_{X(Q_T)},
\end{equation*}
and thus \eqref{nonlin 2} is proved.
\end{proof}

Finally,  combining the previous lemma with the Proposition \ref{source term},  with a classical  fixed-point argument (see, for instance, \cite{ara}), we can obtain the following well-posedness result.

\begin{theorem}\label{well posedness}
Let $L > 0$, $a, b>0$ and $\alpha, \beta \in \mathbb{R}$ satisfying \eqref{ab}.  Assume $p \in [1,2]$ and $h > 0.$ If $u_{0} \in L^{2}(0,L)$ and $z_{0} \in L^{2}(0,1)$ are sufficient small, then the system \eqref{sis1}-\eqref{F} admits a unique solution $u \in X(Q_{T}).$
\end{theorem}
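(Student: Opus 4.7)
The plan is to set up a classical Banach fixed-point argument in the space $X(Q_T)$. Given $\tilde u\in X(Q_T)$, define $\Gamma(\tilde u)=u$, where $u$ is the unique mild solution furnished by Proposition \ref{source term} of the linear system \eqref{with source term} with source term $f(t,x)=-\tilde u(t,x)^{p}\partial_x\tilde u(t,x)$ and the prescribed data $(u_0,z_0)$. A fixed point of $\Gamma$ is by construction a mild solution of \eqref{sis1}-\eqref{F}, so the whole task is to exhibit a closed ball $B_R\subset X(Q_T)$ on which $\Gamma$ is a strict contraction when $\|(u_0,z_0(-h\,\cdot))\|_H$ is small enough.

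First I would check the self-mapping property. By Proposition \ref{source term},
$$\|\Gamma(\tilde u)\|_{X(Q_T)}\leq C\bigl(\|(u_0,z_0(-h\,\cdot))\|_H+\|\tilde u^{p}\partial_x\tilde u\|_{L^{1}(0,T;L^{2}(0,L))}\bigr).$$
For $p=1$ and $p=2$, Lemma \ref{uux} applied with $v\equiv 0$ (or $u_2\equiv 0$) yields $\|\tilde u^{p}\partial_x\tilde u\|_{L^{1}(0,T;L^{2}(0,L))}\leq \kappa(T)\,\|\tilde u\|_{X(Q_T)}^{p+1}$. For intermediate $p\in(1,2)$, the same bound follows from the embedding $X(Q_T)\hookrightarrow L^{\infty}(0,T;L^{2}(0,L))\cap L^{2}(0,T;H^{2}(0,L))$ together with $H^{2}(0,L)\hookrightarrow L^{\infty}(0,L)$, interpolating the two integer exponents. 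Choosing $R:=2C\|(u_0,z_0(-h\,\cdot))\|_H$ and demanding $C\kappa(T)R^{p}\leq \tfrac{1}{2}$ gives $\Gamma(B_R)\subset B_R$; this is where the smallness hypothesis on the data enters.

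Next I would prove the contraction estimate. Applying Proposition \ref{source term} to the difference $\Gamma(\tilde u)-\Gamma(\tilde v)$, which solves the linear system with zero initial and boundary data and source term $-(\tilde u^{p}\partial_x\tilde u-\tilde v^{p}\partial_x\tilde v)$, and invoking \eqref{nonlin 1}-\eqref{nonlin 2} (or their $p\in(1,2)$ analogues), I obtain
$$\|\Gamma(\tilde u)-\Gamma(\tilde v)\|_{X(Q_T)}\leq C'(1+T^{1/2})\bigl(\|\tilde u\|_{X(Q_T)}^{p}+\|\tilde v\|_{X(Q_T)}^{p}\bigr)\|\tilde u-\tilde v\|_{X(Q_T)}.$$
On $B_R$ the coefficient is bounded by $2C'(1+T^{1/2})R^{p}$, which is $\leq \tfrac{1}{2}$ after shrinking $R$ (equivalently, the initial data) if necessary. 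Banach's fixed-point theorem then yields a unique $u\in B_R\subset X(Q_T)$ solving \eqref{sis1}-\eqref{F}, and a standard continuation/uniqueness argument upgrades uniqueness from $B_R$ to the whole of $X(Q_T)$ within the smallness regime.

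The main obstacle I anticipate is twofold. The first, genuinely technical, point is extending the nonlinear estimates of Lemma \ref{uux} to the full range $p\in[1,2]$: one needs to control $\|u^{p}\partial_x u-v^{p}\partial_x v\|_{L^{1}(0,T;L^{2}(0,L))}$ uniformly, which I would handle by splitting $u^{p}\partial_x u-v^{p}\partial_x v=(u^{p}-v^{p})\partial_x u+v^{p}(\partial_x u-\partial_x v)$ and estimating $|u^{p}-v^{p}|\leq p(|u|^{p-1}+|v|^{p-1})|u-v|$, then using the $L^{\infty}$ bound coming from the embedding $X(Q_T)\hookrightarrow L^{\infty}(Q_T)$ to absorb the fractional power $p-1$. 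The second, more bookkeeping, point is that the time-shifted boundary data $\partial_x^{2}u(t-h,0)$ inside the nonlinear system is determined on $(0,h)$ by $z_0$ and on $(h,T)$ by $\partial_x^{2}u(\cdot,0)$ itself; however, this coupling is already packaged into Proposition \ref{source term} (applied iteratively on intervals of length $h$ if $T>h$), so the fixed-point iteration closes in $X(Q_T)$ without any further adjustment.
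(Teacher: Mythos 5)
Your proposal is correct and follows essentially the same route as the paper, which proves Theorem \ref{well posedness} precisely by combining Lemma \ref{uux} with Proposition \ref{source term} through a classical Banach fixed-point (contraction) argument for small data, citing \cite{ara} for the standard details you spelled out.
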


\section{A stabilization result \textit{via} Lyapunov approach} \label{Sec3}
The aim of this part of the work is to prove our first main result presented in Theorem \ref{Lyapunov}.  Precisely, we will prove the case $p=2$, that is, when the nonlinearity takes the form $u^{2}\partial_xu$. The case $u \partial_x u$ can be shown in a similar way, therefore, we will omit its proof.
\begin{proof}[Proof of Theorem \ref{Lyapunov}]
First, we choose the following Lyapunov functional
\begin{equation*}
	V(t)= E(t) + \mu_{1}V_{1}(t) + \mu_{2}V_{2}(t).
\end{equation*}
Here $\mu_{1}, \mu_{2} \in (0,1)$,  $V_1$ is defined by
\begin{equation}\label{V1}
	V_{1}(t)= \int_{0}^{L}xu^2 (t,x)dx
\end{equation}
and $V_2$ is defined by
 \begin{equation*}
 	V_{2}(t)= h\int_{0}^{1}(1 - \rho)(\partial_x^{2}u(t - h\rho,0))^{2}d\rho,
 \end{equation*}
for any regular solution of \eqref{sis1}-\eqref{F}.  Clearly, we have the following
\begin{equation}\label{E and V}
	E(t) \leq V(t),
\end{equation}
for all $t\geq0$.  On the other hand, we have
\begin{equation*}
\begin{split}
	\mu_{1}V_{1}(t) + \mu_{2}V_{2}(t)= &\mu_{1}\int_{0}^{L}xu^2 (t,x)dx + h\mu_{2}\int_{0}^{1}(1 - \rho)(\partial_x^{2}u(t - h\rho,0))^{2}d\rho\\
	\leq&	\mu_{1}L\int_{0}^{L}u^2 (t,x)dx + \mu_{2}\frac{h}{|\beta|}|\beta|\int_{0}^{1}(1 - \rho)(\partial_x^{2}u(t - h\rho,0))^{2}d\rho\\
	\leq &\max\left\{\mu_{1}L, \frac{\mu_{2}}{|\beta|}\right\}E(t), 	
\end{split}	
\end{equation*}
that is,
\begin{equation}\label{V and E}
	E(t)\leq V(t) \leq \biggr(1 + \max\left\{\mu_{1}L, \frac{\mu_{2}}{|\beta|}\right\}\biggr)E(t),
\end{equation}
for all $t\geq0$.

Now, consider a sufficiently regular solution  $u$  of \eqref{sis1}-\eqref{F}.  Differentiating $V_{1}(t)$, using integration by parts and  the boundary condition of \eqref{sis1}-\eqref{F}, it follows that
\begin{equation}\label{deriv of V1}
\begin{split}
		\frac{d}{dt}V_{1}(t)
		=&-2\int_{0}^{L}xu(t,x)\bigl[a\partial_x u +b\partial_x^{3}u - \partial_x^{5}u +u^{2}\partial_x u\bigr](t,x)dx\\
		=&a\int_{0}^{L} u^2 (t,x)dx - 3b\int_{0}^{L}(\partial_xu(t,x))^{2}dx -5\int_{0}^{L}(\partial_x^{2}u(t,x))^{2}dx + \frac{1}{2}\int_{0}^{L}u^4 (t,x)dx\\
&+ L\biggl[\alpha^{2}(\partial_x^{2}u(t,0))^{2} + 2\alpha\beta\partial_x^{2}u(t,0)\partial_x^{2}u(t - h,0) + \beta^{2}(\partial_x^{2}u(t - h,0))^{2}\biggr].
	\end{split}
\end{equation}
Similarly, in view of \eqref{trans. eq}, we have
\begin{equation}\label{deriv of V2}
\begin{split}
		\frac{d}{dt}V_{2}(t)=& 2h\int_{0}^{1}(1 - \rho)\partial_x^{2}u(t - \rho h,0)\frac{d}{dt}\partial_x^{2}u(t - \rho h,0)d\rho\\
		=&		\partial_x^{2}u(t,0)^{2} - \int_{0}^{1}(\partial_x^{2}u(t -\rho h,0))^{2}d\rho.
	\end{split}
\end{equation}
Consequently, \eqref{deriv of V1} and \eqref{deriv of V2} imply that for any $\lambda>0$
\begin{equation*}
\begin{split}
	\frac{d}{dt}V(t) + 2\lambda V(t)=&
	\biggl(\alpha^{2} - 1 + |\beta| + L\mu_{1}\alpha^{2} + \mu_{2}\biggl)(\partial_x^{2} u(t,0))^{2} + \biggl(\beta^{2} - |\beta| + L\mu_{1}\beta^{2}\biggr)(\partial_x^{2} u(t- h,0))^{2}\\
	&
	+ 2\alpha\beta\biggl(1 + L\mu_{1}\biggr)\partial_x^{2} u(t,0)\partial_x^{2} u(t- h,0) + (2\lambda h|\beta| - \mu_{2})\int_{0}^{1}(\partial_x^{2}u(t - \rho h,0))^{2}d\rho \\
	&
	+ 2\lambda\mu_{2}h\int_{0}^{1}(1-\rho)(\partial_x^{2}u(t - \rho h,0))^{2}d\rho + 2\lambda \mu_{1}\int_{0}^{L}x u^2 (t,x)dx + \frac{\mu_{1}}{2}\int_{0}^{L}u^4 (t,x)dx\\
	&
	+ (\mu_{1}a + 2\lambda)\int_{0}^{L}u^2 (t,x)dx -3b\mu_{1}\int_{0}^{L}(\partial_xu(t,x))^{2}dx - 5\mu_{1}\int_{0}^{L}(\partial_x^{2}u(t,x))^{2}dx,
	\end{split}
\end{equation*}
or equivalently,  by reorganizing the terms
\begin{equation}\label{Matrix comp.}
\begin{split}
		\frac{d}{dt}V(t) + 2\lambda V(t) \leq& \bigl(M_{\mu_{1}}^{\mu_{2}}~\eta(t), \eta(t)\bigr)_{\mathbb{R}^{2}}	- 3b\mu_{1}\int_{0}^{L}(\partial_x u(t,x))^{2}dx - 5\mu_{1}\int_{0}^{L}(\partial_x^{2} u(t,x))^{2}dx  \\
		&+ \bigl(2\lambda h(\mu_{2} + |\beta|) - \mu_{2}\bigr)\int_{0}^{1}(\partial_x^{2}u(t - \rho h,0))^{2}d\rho \\
		&+ \bigl(\mu_{1}a + 2\lambda(1 + L\mu_{1})\bigr)\int_{0}^{L}u^2 (t,x) dx+ \frac{\mu_{1}}{2}\int_{0}^{L}u^{4}(t,x)dx,
	\end{split}
\end{equation}
where $\eta(t)=(\partial_x^{2}u(t,0),\partial_x^{2}u(t-h,0))$ and
\begin{equation*}
	\begin{split}
	M_{\mu_{1}}^{\mu_{2}}= \left[\begin{array}{ll}
		(1 +L\mu_{1})\alpha^{2} - 1 + |\beta| + \mu_{2} & \alpha\beta(1 + L\mu_{1}) \\
		\alpha\beta(1 + L\mu_{1}) & \beta^{2} - |\beta| + L\mu_{1}\beta^{2}
	\end{array}\right].
\end{split}
\end{equation*}
Observe that
\begin{equation*}
	\begin{split}
	M_{\mu_{1}}^{\mu_{2}}=M+ L\mu_{1}\left[\begin{array}{ll}
	\alpha^{2} & \alpha\beta \\
	\alpha\beta & \beta^{2}
\end{array}\right]  + \mu_{2}\left[\begin{array}{ll}
1 & 0 \\
0 & 0
\end{array}\right],
\end{split}
\end{equation*}
where $M$ is defined by \eqref{Matrix A}.  Since $M$ is negative definite (see the proof of Proposition \ref{linear} and by virtue of the continuity of the determinant and the trace, one can claim that for $\mu_1$ and $\mu_2>0$ small enough, the matric $M_{\mu_{1}}^{\mu_{2}}$ can also be made negative definite.

Finally,  taking into account $\mu_1$ and $\mu_2>0$ are small enough and using Poincar\'e inequality\footnote{$\|u\|^{2}_{L^{2}(0,L)} \leq \frac{L^{2}}{\pi^{2}}\|\partial_xu\|_{L^{2}(0,L)}$ for $u \in H_{0}^{2}(0,L)$.}, we find
\begin{equation}\label{I}
	\begin{split}
		\frac{d}{dt}V(t) + 2\lambda V(t) \leq & \bigl(2\lambda h(\mu_{2} + |\beta|) - \mu_{2}\bigr)\int_{0}^{1}(\partial_x^{2}u(t - \rho h,0))^{2}d\rho \\
		& - 5\mu_{1}\int_{0}^{L}(\partial_x^{2} u(t,x))^{2}dx + \frac{\mu_{1}}{2}\int_{0}^{L}u^{4}(t,x) dx\\
&	+ \biggl(\frac{L^{2}}{\pi^{2}}(\mu_{1}a + 2\lambda(1 + L\mu_{1}))- 3b\mu_{1}\biggr)\int_{0}^{L}(\partial_xu^{2}(t,x))^{2}dx.
\end{split}
\end{equation}
Additionally, applying Cauchy-Schwarz inequality and using the facts that the energy $E$ defined by \eqref{energia} is nonincreasing, together with $H_{0}^{1}(0,L) \hookrightarrow L^{\infty}(0,L)$,   we have
\begin{equation}\label{II}
	\begin{split}
		 \frac{\mu_{1}}{2}\int_{0}^{L}u^{4}(t,x) dx \leq &\frac{\mu_{1}}{2}\|u(t,\cdot )\|^{2}_{L^{\infty}(0,L)}\int_{0}^{L}u^{2}(t,x)dx\\
		\leq &
		\frac{\mu_{1}}{2}L\|\partial_xu(t,\cdot )\|^{2}_{L^{2}(0,L)}\|u(t,x)\|^{2}_{L^{2}(0,L)}\\
		\leq &
		\frac{L \mu_{1}}{2}\bigl(\|u_{0}\|^{2}_{L^{2}(0,L)} + h|\beta|\|z_{0}(-h(\cdot))\|^{2}_{L^{2}(0,1)}\bigr)\|\partial_x u(t,\cdot )\|^{2}_{L^{2}(0,L)}\\
		\leq &
		\frac{L \mu_{1}}{2}\|(u_{0}, z_{0})\|^{2}_{H}\|\partial_x u(t,\cdot )\|^{2}_{L^{2}(0,L)}
		\leq
		r^{2}\frac{L \mu_{1}}{2} \|\partial_x u(t,\cdot )\|^{2}_{L^{2}(0,L)}.
\end{split}
\end{equation}
Combining \eqref{I} and \eqref{II} yields
\begin{equation} \label{Dt V}
		\frac{d}{dt}V(t) + 2\lambda V(t) \leq
		\Xi\|\partial_x^{2} u(t,x)\|_{L^{2}(0,L)}^{2}  + \bigl(2\lambda h(\mu_{2} + |\beta|) - \mu_{2}\bigr)\|\partial_x^{2}u(t -  \rho h,0)\|^{2}_{L^{2}(0,1)},
\end{equation}
where
$$\Xi= \frac{L \mu_{1}}{2}r^{2} + \frac{L^{2}}{\pi^{2}}\bigl(\mu_{1}a + 2\lambda(1 + L\mu_{1})\bigr) - 3b \mu_1.$$
In view of the constraint  \eqref{L} on the length $L$, one can choose $r$ small enough to get $$0<r<\frac{2}{\pi}\sqrt{\frac{3b\pi^2-L^2 a}{L}}.$$
Then, we pick $\lambda>0$ such that \eqref{lambda} holds to ensure that
\begin{equation}\label{Dt V2}
	\frac{d}{dt}V(t) + 2\lambda V(t) \leq 0,
\end{equation}
for all $t>0$. Therefore,  integrating \eqref{Dt V2} over $(0,t)$,  and thanks to \eqref{E and V} and \eqref{V and E}, yields that
\begin{equation}
E(t) \leq \biggl(1 + \max\left\{\mu_{1}L, \frac{\mu_{2}}{|\beta|}\right\}\biggr)E(0)e^{-2\lambda t},
\end{equation}
for all $t>0$, which completes the proof.
\end{proof}

\section{Second stability result \textit{via} compactness-uniqueness argument}\label{Sec4}
The second part of this manuscript is devoted to the proof of another stability result of \eqref{sis1}-\eqref{F} stated in Theorem \ref{main2}. To be more precise, we shall show a generic exponential stability result of the solutions to \eqref{sis1}-\eqref{F} by attempting to study the critical set phenomenon of the system.

\subsection{Stability of the linear system}
We first prove that the following observability inequality ensures that the linear system \eqref{2.1} is exponentially stable.
\begin{proposition}\label{OI_prop}
Assume that $\alpha$ and $\beta$ satisfies \eqref{ab} and $L>0$. Thus,  there exists a constant $C>0$, such that for all $\left(u_{0}, z_{0}\right) \in H$
\begin{equation}\label{OI}
\begin{split}
\int_{0}^{L} u_{0}^{2}(x) d x+|\beta| h \int_{0}^{1} z_{0}^{2}(-h \rho) d \rho \mid
\leq C \int_{0}^{T}\left((\partial^2_x u(0, t))^{2}+z^{2}(1, t)\right) d t
\end{split}
\end{equation}
where $(u, z)=S(.)\left(u_{0}, z_{0}(-h \cdot)\right)$ is the solution of the system \eqref{2.1}-\eqref{trans. eq}.
\end{proposition}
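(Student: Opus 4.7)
The plan is to prove \eqref{OI} by a classical compactness--uniqueness argument, assuming throughout that $T>h$ and that $L$ is such that the problem $(\NN)$ of Lemma \ref{lem2} admits only the trivial solution (this hypothesis, present in Theorem \ref{main2}, is the one which makes the required unique continuation step work). Suppose for contradiction that \eqref{OI} fails. Then there exists a sequence $(u_0^n, z_0^n)$ in $H$ with $\|(u_0^n, z_0^n(-h\cdot))\|_H = 1$ and
\[
I_n := \int_0^T \bigl((\partial_x^2 u^n(0,t))^2 + (z^n(1,t))^2 \bigr)\, dt \longrightarrow 0,
\]
where $(u^n, z^n) = S(\cdot)(u_0^n, z_0^n(-h\cdot))$.

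By Proposition \ref{lienar1}, the sequence $(u^n)$ is bounded in $X(Q_T)$, and the PDE in \eqref{2.1} together with standard duality gives $\partial_t u^n$ bounded in $L^2(0,T; H^{-3}(0,L))$. Aubin--Lions then yields, along a subsequence, $u^n \to u$ strongly in $L^2(Q_T)$ and weakly in $L^2(0,T; H^2(0,L))$. Passing to the limit in \eqref{2.1} and in the feedback relation $\partial_x^2 u^n(t,L) = \alpha \partial_x^2 u^n(t,0) + \beta z^n(t,1)$, and using $I_n \to 0$ to kill the boundary contributions at both endpoints, one finds that the limit $u$ solves the over-determined system
\[
\begin{cases}
\partial_t u + a\, \partial_x u + b\, \partial_x^3 u - \partial_x^5 u = 0, & (t,x) \in (0,T)\times(0,L),\\
u(t,0) = u(t,L) = \partial_x u(t,0) = \partial_x u(t,L) = 0, & t \in (0,T),\\
\partial_x^2 u(t,0) = \partial_x^2 u(t,L) = 0, & t \in (0,T).
\end{cases}
\]

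The heart of the argument is to show that this over-determined problem forces $u \equiv 0$. Following the Rosier strategy, the space of initial data producing such over-determined solutions is $S(t)$-invariant and finite-dimensional, hence spanned by eigenfunctions of the generator $A$; this reduces the question to the spectral problem $(\NN)$ of Lemma \ref{lem2}. After a Fourier/Laplace transform, this spectral problem is encoded by the family of meromorphic functions $f_\alpha(\cdot, L)$ introduced in \eqref{N}--\eqref{f}, and by the criterion of Santos \emph{et al.} \cite{santos}, the hypothesis on $L$ guarantees that $f_\alpha(\cdot, L)$ fails to be entire for every $\alpha \in \mathbb{C}^4 \setminus \{0\}$. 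It follows that $u \equiv 0$ on $Q_T$.

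Once $u \equiv 0$ is known, the contradiction closes quickly using the trace inequalities of Proposition \ref{lienar1}. Indeed, \eqref{2.15} yields
\[
\|u_0^n\|^2_{L^2(0,L)} \leq \tfrac{1}{T}\|u^n\|^2_{L^2(0,T;L^2(0,L))} + \|\partial_x^2 u^n(\cdot,0)\|^2_{L^2(0,T)} \longrightarrow 0,
\]
since the first term vanishes by strong $L^2$ convergence to $u=0$ and the second by $I_n\to 0$. For the delay component, the representation $z^n(T,\rho) = \partial_x^2 u^n(T - h\rho, 0)$, valid for $T>h$ and $\rho \in (0,1)$, gives $\|z^n(T,\cdot)\|^2_{L^2(0,1)} \leq h^{-1} I_n \to 0$, and then \eqref{2.16} implies $\|z_0^n(-h\cdot)\|^2_{L^2(0,1)} \to 0$. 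Summing both bounds contradicts the normalization $\|(u_0^n, z_0^n(-h\cdot))\|_H = 1$. The main obstacle is clearly the unique continuation step: unlike in Rosier's original argument for KdV, here the spectral problem cannot be solved explicitly, and the Möbius/entire-function criterion from \cite{santos} must be invoked to extract the required information from the functions in \eqref{f}.
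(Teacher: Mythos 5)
Your argument is essentially the paper's own proof: a compactness--uniqueness contradiction argument using the bounds of Proposition \ref{lienar1} and Aubin--Lions compactness, reduction of the resulting over-determined problem to the spectral problem $(\NN)$ via Rosier's invariant-subspace argument, and resolution of $(\NN)$ through the entire-function/M\"obius criterion of Santos \emph{et al.}, exactly as in Lemmas \ref{SP} and \ref{lem2}. The only (harmless) difference is bookkeeping at the end: the paper first shows $(u_0^n,z_0^n(-h\cdot))$ is Cauchy in $H$ (which also cleanly identifies the limit $u$ as a semigroup trajectory before applying unique continuation, and requires $T>h$ as you note) and contradicts $\|u_0\|_{L^2(0,L)}=1$, whereas you derive $u\equiv 0$ first and then use \eqref{2.15}--\eqref{2.16} to drive both components of the initial data to zero.
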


Indeed, if \eqref{OI} is true, we get
$$
E(T)-E(0) \leq-\frac{E(0)}{C} \Rightarrow E(T) \leq E(0)-\frac{E(0)}{C} \leq E(0)-\frac{E(T)}{C},
$$
where $E(t)$ is defined by \eqref{energia}. Thus,
\begin{equation}\label{semigroup}
E(T) \leq\gamma E(0), \quad \text{where} \quad  \gamma=\frac{C}{1+C}<1.
\end{equation}
Now,  the same argument used on the interval $[(m-1) T, m T]$ for $m=1,2, \ldots$, yields that
$$
E(m T) \leq \gamma E((m-1) T) \leq \cdots \leq \gamma^{m} E(0).
$$
Thus, we have
$$
E(m T) \leq e^{-\nu m T} E(0) \quad \text{with} \quad
\nu=\frac{1}{T} \ln \left(1+\frac{1}{C}\right)>0.
$$
For an arbitrary positive $t$, there exists $m \in \mathbb{N}^{*}$ such that $(m-1) T<t \leq m T$, and by the non-increasing property of the energy, we conclude that
$$
E(t) \leq E((m-1) T) \leq e^{-\nu(m-1) T} E(0) \leq \frac{1}{\gamma} e^{-\nu t} E(0),
$$
showing the exponential stability result for the linear system.

For sake of clarity, the proof of Proposition \ref{OI_prop} will be achieved by steps.

\vspace{0.1cm}
\noindent\textbf{Step 1: Compactness-uniqueness argument}
\vspace{0.1cm}

We argue by contradiction.  Suppose that \eqref{OI} does not hold and hence there exists a sequence  $\left(\left(u_{0}^{n}, z_{0}^{n}(-h \cdot)\right)\right)_{n} \subset H$ such that
\begin{equation}\label{unit}
\int_{0}^{L}\left(u_{0}^{n}\right)^{2}(x) d x+|\beta| h \int_{0}^{1}\left(z_{0}^{n}\right)^{2}(-h \rho) d \rho=1
\end{equation}
and
\begin{equation}\label{n=0}
\left\| \partial^2_x u^{n}(0, .)\right\|_{L^{2}(0, T)}^{2}+\left\|z^{n}(1, .)\right\|_{L^{2}(0, T)}^{2} \rightarrow 0 \text { as } n \rightarrow+\infty,
\end{equation}
where $\left(u^{n}, z^{n}\right)=S\left(u_{0}^{n}, z_{0}^{n}(-h \cdot)\right)$.

Owing to Proposition \ref{linear}, $\left(u^{n}\right)_{n}$ is a bounded sequence in $L^{2}\left(0, T, H^{2}(0, L)\right)$, and consequently $$\partial_t u^{n}=-\partial_x u^{n}-\partial^3_x u^{n}+\partial_x^5 u \quad \text{is bounded in}\quad  L^{2}\left(0, T, H^{-3}(0, L)\right). $$
Thanks to a result of \cite{Simon},  $\left(u^{n}\right)_{n}$ is relatively compact in $L^{2}\left(0, T, L^{2}(0, L)\right)$ and we may assume that $\left(u^{n}\right)_{n}$ is convergent in $L^{2}\left(0, T, L^{2}(0, L)\right)$. Moreover, using \eqref{2.15} and \eqref{n=0}, we have that $\left(u_{0}^{n}\right)_{n}$ is a Cauchy sequence in $L^{2}(0, L)$.

\vspace{0.1cm}

\noindent\textbf{Claim 1.}\textit{ If $T>h$, then $\left(z_{0}^{n}(-h \cdot)\right)_{n}$ is a Cauchy sequence in $L^{2}(0,1)$.}

\vspace{0.1cm}
In fact, since $z^{n}(\rho, T)=u_{xx}^{n}(0, T-\rho h)$, if $T>h$, we have
$$
\int_{0}^{1}\left(z^{n}(\rho, T)\right)^{2} d \rho =\int_{0}^{1}\left( \partial^2_x u^{n}(0, T-\rho h)\right)^{2} d \rho  \leq \frac{1}{h} \int_{0}^{T}\left(\partial^2_x u^{n}(0, t)\right)^{2} dt.
$$
Using \eqref{2.16}, for $T>h$ yields that
$$
\left\|z_{0}^{n}(-h \cdot)\right\|_{L^{2}(0,1)}^{2}  \leq \frac{1}{h}\left\|\partial^2_x u^{n}(0, \cdot)\right\|_{L^{2}(0, T)}^{2}+\frac{1}{h}\left\|z^{n}(1, \cdot)\right\|_{L^{2}(0, T)}^{2}.
$$
Thus, $\left(z_{0}^{n}(-h \cdot)\right)_{n}$ is a Cauchy sequence in $L^{2}(0,1)$ by means of \eqref{n=0} and hence the Claim 1 is ascertained.

Now, let us pick $\left(u_{0}, z_{0}(-h \cdot)\right)=\lim _{n \rightarrow \infty}\left(u_{0}^{n}, z_{0}^{n}(-h \cdot)\right) \;\; \mbox{in} \; H.$
This, together with \eqref{unit},  yields that
$$\int_{0}^{L} u_{0}^{2}(x) d x+|\beta| h \int_{0}^{1} z_{0}^{2}(-h \rho) d \rho=1.$$
Furthermore, let $(u,z)=S(\cdot)\left(u_{0}, z_{0}(-h \cdot)\right),$
which implies, thanks to Proposition \ref{linear}, that
$$\left(\partial^2_x u (0, \cdot),z(1, \cdot)\right)=\lim _{n \rightarrow \infty}\left(\partial^2_x u^{n}(0, \cdot), z^{n}(1, \cdot)\right)$$ in $L^{2}(0, T)$.
Combining the latter with \eqref{n=0} gives
$\left(\partial^2_x u(0, \cdot), z(1, .)\right)=0.$ As we have $z(1, t)=\partial^2_x u(0, t-h)=0$, we deduce
that $z_{0}=0$ and $z=0$. Consequently $u$ is solution of
\begin{equation}\label{kk}
\begin{cases}
\partial_t u-\partial_x u+\partial^3_x u-\partial^5_x u=0, & x \in(0, L), t>0 \\
u(0, t)=u(L, t)=\partial_x u(L, t)=\partial_x u(0, t)=\partial^2_x u(L, t)=\partial^2_x u(0, t)=0, & t>0 \\
u(x, 0)=u_{0}(x), & x \in(0, L)
\end{cases}
\end{equation}
with
\begin{equation}\label{cont}
\left\|u_{0}\right\|_{L^{2}(0, L)}=1 .
\end{equation}

\noindent\textbf{Step 2: Reduction to a spectral problem}
\begin{lemma}\label{SP} For any $T>0$, let $N_{T}$ denote the space of the initial state $u_{0}\in L^2(0,L)$,  such that the solution of the Kawahara system $u(t)=S(t)u_{0}$ satisfies \eqref{kk}. Then, $N_{T}=\{0\}$.
\end{lemma}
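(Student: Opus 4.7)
The plan is to carry out the standard Rosier-style compactness-uniqueness reduction: promote $N_T$ to a finite-dimensional $A$-invariant subspace, reduce the question to a spectral problem on it, and eliminate non-trivial eigenfunctions by a Fourier-analytic argument that invokes the entire-function criterion of Santos \textit{et al.} \cite{santos}.

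First I would observe that $N_T$ is closed in $L^{2}(0,L)$ as the kernel of the continuous observation map, and that a bootstrap argument, using the equation in \eqref{kk} together with the overdetermined traces $\partial_x^{2}u(0,t)=\partial_x^{2}u(L,t)=0$, promotes every element of $N_T$ to $H^{s}(0,L)$ for every $s\geq 0$. The resulting compact embedding forces $N_T$ to be finite-dimensional. Next, for every $u_{0}\in N_T$ and every $s\in[0,T]$, $S(s)u_{0}$ still satisfies the overdetermined boundary data on $[0,T]$, thanks to the semigroup property and the autonomy of \eqref{2.1}; hence $S(t)N_T\subset N_T$ for all $t\geq 0$, so $N_T$ is invariant under the generator $A$. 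If $N_T$ were non-trivial, the restriction of $A$ to its complexification would admit an eigenvalue $\lambda\in\C$ with a non-trivial eigenfunction $\varphi$ satisfying
\begin{equation*}
\lambda\varphi+a\varphi'+b\varphi'''-\varphi^{(5)}=0\quad\text{in }(0,L),\qquad \varphi(0)=\varphi(L)=\varphi'(0)=\varphi'(L)=\varphi''(0)=\varphi''(L)=0.
\end{equation*}

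To dispose of this overdetermined spectral problem, I would extend $\varphi$ by zero to the whole real line, denote the extension by $\wt{\varphi}$, and take its Fourier transform. Since $\varphi$ and its first two derivatives vanish at both endpoints, the distributional derivatives up to order three of $\wt{\varphi}$ coincide with the classical ones, and the boundary terms that appear in the fourth and fifth distributional derivatives involve only the four numbers $\alpha=(\varphi'''(0),\varphi^{(4)}(0),\varphi'''(L),\varphi^{(4)}(L))\in\C^{4}$. The distributional form of the ODE then produces
\begin{equation*}
q(\mu)\,\wh{\wt{\varphi}}(\mu)=N_{\alpha}(\mu,L),
\end{equation*}
where $q(\mu)=\lambda+ia\mu-ib\mu^{3}-i\mu^{5}$ is a polynomial of degree five, and
\begin{equation*}
N_{\alpha}(\mu,L)=-\varphi^{(4)}(0)-i\mu\,\varphi'''(0)+e^{-i\mu L}\bigl(\varphi^{(4)}(L)+i\mu\,\varphi'''(L)\bigr)
\end{equation*}
is entire, matching the family \eqref{N}. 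Since $\wh{\wt{\varphi}}$ is entire by Paley--Wiener, the quotient $f_{\alpha}(\cdot,L)=N_{\alpha}(\cdot,L)/q(\cdot)$ from \eqref{f} must be entire, forcing $N_{\alpha}$ to vanish at every root of $q$. By the standing hypothesis on $L$ in Theorem \ref{main2}, the Möbius-type characterization of Santos \textit{et al.} \cite{santos} applied to problem $(\NN)$ rules out any non-trivial $\alpha\in\C^{4}\setminus\{0\}$ meeting those constraints, so $\alpha=0$.

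Once $\alpha=0$, the four identities $\varphi'''(0)=\varphi^{(4)}(0)=\varphi'''(L)=\varphi^{(4)}(L)=0$ combine with the original boundary data to provide five independent Cauchy conditions at $x=0$, which, for a fifth-order linear ODE, force $\varphi\equiv 0$, contradicting the non-triviality of the eigenfunction. Hence $N_T=\{0\}$. The main obstacle in this plan is the Fourier step: one must match the boundary contributions of the extension exactly to the form \eqref{N}, identify the polynomial $q$ correctly from the Kawahara symbol, and invoke the Santos \textit{et al.} criterion so as to exclude non-trivial entire quotients $f_{\alpha}(\cdot,L)$ for every $L$ outside the critical Möbius set implicitly defined through $(\NN)$.
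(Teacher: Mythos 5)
Your proposal is correct and follows essentially the same route as the paper: the Rosier-type finite-dimensional invariant-subspace argument reducing $N_T\neq\{0\}$ to the overdetermined eigenvalue problem, which the paper then eliminates (in Lemma \ref{lem2}) by exactly the zero-extension, Fourier-transform, entire-quotient and M\"obius argument of Santos \emph{et al.} that you sketch. The only detail you gloss is that the paper first shows $\lambda$ is purely imaginary, so that $q$ has real coefficients and hence one simple real root plus two distinct complex-conjugate pairs, which is precisely the root configuration needed to apply Lemmas \ref{mobiuslemma2} and \ref{mobiuslemma}; since you also invoke the standing assumption that $(\NN)$ admits only the trivial solution, this omission does not undermine your argument.
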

\begin{proof} We argue as in \cite[Theorem 3.7]{ro}.  If $N_{T} \neq\{0\}$, then the map $ u_{0}\in \mathbb{C} N_{T} \rightarrow A\left(N_{T}\right) \subset \mathbb{C} N_{T}$ ($\mathbb{C} N_{T}$ denotes the complexification of $N_{T}$) has (at least) one eigenvalue. Hence, there exists a pair $(\lambda,u_0) \in \mathbb{C}\times H^{5}(0, L) \backslash\{0\}$ such that
$$
\begin{cases}\lambda u_{0}+u_{0}^{\prime}+ u_{0}^{\prime \prime \prime}- u_{0}^{\prime \prime \prime \prime \prime}=0, & \text { in }(0, L), \\
 u_{0}(0)=u_{0}(L)=u_{0}^{\prime}(0)=u_{0}^{\prime}(L)=u_{0}^{\prime \prime}(0)=u_{0}^{\prime \prime}(L)=0 . & \end{cases}
$$
To obtain the contradiction, it remains to prove that such a pair $\left(\lambda, u_{0}\right)$ does not exist. This will be done in the next step.
\end{proof}

\noindent\textbf{Step 3: M\"obius transformation}
\vglue 0.2cm
To simplify the notation, henceforth we denote $u_0:=u$.  Moreover, the notation
$\{0, L\}$ means that the function is applied to $0$ and $L$, respectively.
\begin{lemma}\label{lem2}
Let $L>0$ and consider the assertion
\begin{equation*}
(\NN):\ \ \exists \lambda \in \C,  \exists u  \in H^2_0(0,L)\cap H^5(0,L) \,\, \text{such that}\
\begin{cases}
\lambda u +u'+u'''-u'''''=0, & \text{on} \,\, (0,L),  \\
u(x)=u'(x)=u''(x)=0, & \text{in} \,\, \{0,L\}.
\end{cases}
\end{equation*}
If $(\lambda,u) \in \C \times  H^2_0(0,L)\cap H^5(0,L)$ is solution of $(\NN)$, then
$u=0.$
\end{lemma}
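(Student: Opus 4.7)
The plan follows the Paley--Wiener/Laplace transform strategy pioneered by Rosier \cite{ro}, closed by invoking the algebraic criterion of Santos \emph{et al.} \cite{santos} that the authors have already foreshadowed in the introduction. First, extend $u$ by zero outside $[0,L]$. Because $u$, $u'$ and $u''$ all vanish at both endpoints, the extension $\tilde u$ belongs to $H^{2}(\R)$ with compact support, so its (two-sided) Laplace transform
\[
\widehat{u}(\mu) \;:=\; \int_{0}^{L} u(x)\, e^{-\mu x}\, dx
\]
is an entire function of $\mu \in \C$ of exponential type at most $L$, by Paley--Wiener.

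Next, multiply the ODE $\lambda u + u' + u''' - u^{(5)} = 0$ by $e^{-\mu x}$ and integrate by parts five times on $(0,L)$. The six vanishing boundary conditions on $u,u',u''$ wipe out most boundary contributions, and I arrive at the identity
\[
q(\mu)\, \widehat{u}(\mu) \;=\; N_{\alpha}(\mu, L), \qquad q(\mu) \;:=\; -\mu^{5} + \mu^{3} + \mu + \lambda,
\]
where $N_{\alpha}(\mu, L) = (\alpha_{4} + \mu\,\alpha_{2})\,e^{-\mu L} - (\alpha_{3} + \mu\,\alpha_{1})$ and $\alpha = (u'''(0), u'''(L), u^{(4)}(0), u^{(4)}(L)) \in \C^{4}$. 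This is precisely the family $f_{\alpha}(\mu, L) = N_{\alpha}(\mu,L)/q(\mu)$ described in \eqref{N}--\eqref{f}.

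If $\alpha = 0$, then $q\,\widehat u \equiv 0$ forces $\widehat u \equiv 0$ and hence $u \equiv 0$, as desired. So it suffices to rule out any nonzero $\alpha$. Since $\widehat u$ is entire while $q$ is a polynomial of degree five, entirety of $f_{\alpha}(\cdot,L)$ forces every root of $q$ (with multiplicity) to be cancelled by the numerator $N_{\alpha}(\cdot,L)$. Labelling the five roots of $q$ as $\mu_{1},\dots,\mu_{5}$, this gives an overdetermined homogeneous system of five $\C$--linear equations $N_{\alpha}(\mu_{j},L)=0$ in the four unknowns $\alpha_{1},\dots,\alpha_{4}$; existence of a nonzero $\alpha$ is then equivalent to a rank-drop of the corresponding $5\times 4$ matrix, whose entries involve $e^{-\mu_{j}L}$.

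Here I would invoke the Santos \emph{et al.} result: it translates this rank-drop into an algebraic constraint saying that certain pairs of roots must be related by a Möbius transformation $T=T(L)$ depending on $L$. Combining this with the Vieta formulas for $q$ (which encode the elementary symmetric functions of the $\mu_{j}$ in terms of $\lambda$) then rules out such a relation for the admissible $L$ and forces $\alpha = 0$, whence $u = 0$. The principal obstacle is precisely this last compatibility check: unlike the KdV situation of \cite{ro}, the factor $e^{-\mu L}$ couples all five roots simultaneously and prevents a direct elementary determinantal calculation, which is exactly why the Santos \emph{et al.} machinery is required and why, as the authors acknowledge, this route does not yield an explicit description of the critical set but only a Möbius-type characterization of its would-be elements.
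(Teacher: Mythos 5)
Your setup (extension of $u$ by zero, transform, the identity $q\,\widehat u = N_{\alpha}$, and the observation that entirety of $\widehat u$ forces $N_{\alpha}(\cdot,L)$ to vanish at every root of $q$) is the same route the paper takes via \cite{santos}. However, there are two genuine gaps. First, you never show that $\lambda$ is purely imaginary. The paper's opening step is to multiply the equation by $\overline{u}$ and integrate over $(0,L)$; the boundary conditions kill all boundary terms and give $\lambda = ir$ with $r\in\R$. This is what makes the relevant symbol a polynomial with \emph{real} coefficients, $q(\xi)=\xi^5+\xi^3-\xi+r$, and that real structure is exactly what the rest of the proof exploits: the paper shows (Claim 2) that $q$ has one simple real root and two pairs of complex conjugate roots when $r\neq 0$, so that its zero set contains four distinct points $\xi_1,\xi_2,\bar\xi_1,\bar\xi_2$ and its imaginary parts take at least three values. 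With a general complex $\lambda$, as in your $q(\mu)=-\mu^5+\mu^3+\mu+\lambda$, the roots have no conjugate symmetry and the two lemmas of \cite{santos} cannot be brought to bear.

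Second, the closing argument — deriving a contradiction from the five conditions $N_{\alpha}(\xi_j,L)=0$ with $\alpha\neq 0$ — is left as a declaration of intent rather than a proof. The actual argument splits on the discriminant $d(\alpha)=\alpha_1\alpha_3-\alpha_2\alpha_4$, a case distinction your rank-drop formulation does not make: if $d(\alpha)\neq 0$, the cancellation condition forces a M\"obius transformation to satisfy $M(\xi_i)=e^{-iL\xi_i}$ at the roots of $q$, which contradicts the Santos et al.\ lemma that no M\"obius map can do this at two distinct conjugate pairs; if $d(\alpha)=0$, there is no M\"obius map at all, and one instead uses their second lemma, that the zeros of $N_{\alpha}(\cdot,L)$ then have at most two distinct imaginary parts, against the at-least-three imaginary parts of the roots of $q$. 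Vieta's formulas play no role, and there is no restriction to ``admissible $L$'' to exploit: the conclusion is that $f_{\alpha}(\cdot,L)$ is entire for \emph{no} $L>0$ (and no $r$), which is what forces $\alpha=0$ and hence $u=0$ for every $L>0$. As written, your proposal records the correct strategy but omits both the reduction $\lambda\in i\R$ and the entire mechanism that produces the contradiction.
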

\begin{proof}
Consider the following system
\begin{equation}\label{e8}
\begin{cases}
\lambda u+u'+u'''-u'''''=0, & \text{on} \,\, (0,L),  \\
u(x)=u'(x)=u''(x)=0, & \text{in} \,\, \{0,L\}.
\end{cases}
\end{equation}
Multiplying the equation \eqref{e8} by $\overline{u}$ and integrating in $[0,L]$,  we have that $\lambda$ is purely imaginary, i.e., $\lambda=ir$, for $r\in\mathbb{R}$. Now,  extending the function $u$ to $\mathbb{R}$ by setting  $u=0$ for $x\not\in [0,L]$, we have that the extended function satisfies
 \begin{equation*}
 \lambda u+ u'+u'''-u'''''=-u''''(0)\delta_0^{'}+u''''(L)\delta_L^{'}-u'''(0)\delta_0+u'''(L)\delta_L,
\end{equation*}
in ${\mathcal S }'(\R )$,  where $\delta_\zeta$ denotes the Dirac measure at $x=\zeta$ and the derivatives $u''''(0)$, $u''''(L)$, $u'''(0)$ and $u'''(L)$ are those of the
function $u$ when restricted to $[0,L]$.  Taking the Fourier transform of each term in the above system and integrating by parts, we obtain
\begin{equation*}
\begin{split}
\lambda \hat u (\xi )  + i\xi \hat u(\xi )  +(i\xi )^3 \hat u (\xi) -  (i\xi )^5 \hat u (\xi)  = -(i\xi)u'''(0)+(i\xi)u'''(L)e^{-iL\xi}-u''''(0)+u''''(L)e^{-iL\xi}.
\end{split}
\end{equation*}
Take $\lambda = -ir$ and let $f_{\alpha}(\xi,L)=i \hat u (\xi )$. The latter gives
\begin{equation*}
f_{\alpha}(\xi,L)=\frac{N_{\alpha}(\xi,L)}{q(\xi)},
\end{equation*}
where $N_{\alpha}(\cdot,L)$ is defined by
\begin{equation}\label{Na}
N_{\alpha}(\xi,L)=\alpha_1i\xi-\alpha_2i\xi e^{-i\xi L}+\alpha_3-\alpha_4e^{-i\xi L}
\end{equation}
and
\begin{equation*}
q(\xi)=\xi^5+\xi^3-\xi+r,
\end{equation*}
where
$\alpha_i$, for $i=1,2,3,4$, are the traces of $u'''$ and $u''''$.

For each $r\in\mathbb{R}$ and $\alpha\in\mathbb{C}^4\setminus \{0\}$, let $\FF_{\alpha r}$ be the set of $L>0$ values, for which the function $f_{\alpha}(\cdot,L)$ is entire. 
Now, let us recall the equivalent following statements:
\begin{itemize}
\item[A1.] $f_{\alpha}(\cdot,L)$ is entire;
\item[A2.] all zeros, taking the respective multiplicities into account, of the polynomial $q$ are zeros of $N_{\alpha}(\cdot,L)$;
\item[A3.]  the maximal domain of $f_{\alpha}(\cdot,L)$ is $\mathbb{C}$.
\end{itemize}
Whereupon, the function $f_{\alpha}(\cdot,L)$ is entire, due to the equivalence between statement A1 and A2, if the following holds
\begin{equation*}
\frac{\alpha_1i\xi_i+\alpha_3}{\alpha_2i\xi_i+\alpha_4}=e^{-iL\xi_i},
\end{equation*}
where $\xi_i$ denotes the zeros of $q(\xi)$, for $i=1,2,3,4,5$.
Thereafter, let us define, for $\alpha \in\mathbb{C}^4\setminus \{0\}$, the following discriminant
\begin{equation}\label{discri}
d(\alpha)=\alpha_1\alpha_3-\alpha_2\alpha_4.
\end{equation}
Then, for $\alpha\in\mathbb{C}^4\setminus \{0\}$, such that $d(\alpha)\neq0$ the M\"obius transformations can be introduced by
\begin{equation}\label{mobius}
M(\xi_i)=e^{-iL\xi_i},
\end{equation}
for each zero $\xi_i$ of the polynomial $q(\xi)$.

\vspace{0.1cm}

\noindent The next claim describes the behavior of the roots of polynomial $q(\cdot)$:

\vspace{0.1cm}

\noindent\textbf{Claim 2.}
\textit{The polynomial $q(\cdot)$ has exactly one real root with multiplicity $1$ and two pairs of complex conjugate roots.}
\vspace{0.1cm}

Indeed, we suppose that $r\neq 0$ (the case $r=0$ will be discussed later). Note that the derivative of $q$ is given by
\begin{equation*}
q'(\xi)=5\xi^4+3\xi^2-1,
\end{equation*}
and its zeros are $\pm z_1$ and $\pm z_2$, where
\begin{equation*}
z_1=\sqrt{\frac{-3-\sqrt{29}}{10}} \quad\text{and}\quad z_2=\sqrt{\frac{-3+\sqrt{29}}{10}}.
\end{equation*}
It is easy to see that $z_1$ belongs to $\C \setminus \R$ and $z_2$ belongs to $\R$. Hence, the polynomial $q(\cdot)$ does not have critical points, which means that $q(\cdot)$ has exactly one real root. Suppose that $\xi_0 \in \R$ is the root of $q(\cdot)$ with multiplicity $m \leq 5$. Consequently,
$$q(\xi_0)= q'(\xi_0)= ... = q^{(m-1)}(\xi_0)=0.$$

Consider the following cases:
\begin{enumerate}
\item[(i)] If $\xi_0$ has multiplicity $5$, it follows that $q(\xi_0)=0$ and $q''''(\xi_0)=-120\xi_0=0$,  implying that $\xi_0=0$ and $r=0$.
\item[(ii)] If $\xi_0$ has multiplicity $4$, it follows that $q'''(\xi_0)=60\xi^2_0+6=0$ and thus $\xi_0 \in i\R$.
\item[(iii)] If $\xi_0$ has multiplicity $3$, it follows that $q(\xi_0)=0$ and $q''(\xi_0)=20\xi^3_0+6\xi_0=0$ and hence $\xi_0=0$ and $r=0$ or $\xi_0 \in i\R$.
\item[(iv)] If $\xi_0$ has multiplicity $2$, it follows that $q'(\xi_0)=5\xi^4_0+3\xi_2-1=0$,  implying that $\xi_0 \in \C \setminus \R.$
\end{enumerate}
Note that in all cases, since $r\neq 0$ and $\xi_0 \in \R$, we get a contradiction.  Consequently, $q(\cdot)$ has exactly one real root, with multiplicity $1$. This means that this polynomial has two pairs of complex conjugate roots.

Now, we assume that $r=0$. Then, we obtain that $ q(\xi)= \xi(\xi^4+\xi^2-1),$ whose roots are $0, \pm   \rho$ and $\pm  k$ where
\begin{equation}
\rho=\sqrt{\frac{\sqrt5-1}{2}} \quad \text{and} \quad k=i\sqrt{\frac{1+\sqrt5}{2}}
\end{equation}
Thus, $q(\cdot)$ has two pairs of complex conjugate roots and three real roots, proving Claim 2.

Further to Claim 2, and in order to conclude the proof of Lemma \ref{lem2}, we need two additional lemmas whose proofs are given in \cite{santos} (see Lemmas 2.1 and 2.2).

\begin{lemma}\label{mobiuslemma2}
Let non null $\alpha \in   \C^4$ with $d(\alpha) = 0$ and $L > 0$ for $d(\alpha)$ defined in \eqref{discri}. Then, the set of the imaginary parts of the
zeros of $N_{\alpha}(\cdot,L)$ in \eqref{Na} has at most two elements.
\end{lemma}

\begin{lemma}\label{mobiuslemma}
For any $L > 0$, there is no M\"obius transformation $M$, such that
\begin{align*}
M(\xi)=e^{-iL\xi}, \quad \xi\in \{ \xi_1, \xi_2, \bar{\xi}_1, \bar{\xi}_2 \},
\end{align*}
with $\xi_1, \xi_2, \bar{\xi}_1, \bar{\xi}_2$ all distinct in $\C$.
\end{lemma}

We are now in position to prove the Lemma \ref{lem2}.  Let us consider two cases: \begin{itemize}
\item[i.] $d(\alpha)\neq 0$;
\item[ii.] $d(\alpha)=0$,
\end{itemize}
where $d(\alpha)$ was defined in \eqref{discri}.

First,  supposing that $d(\alpha)\neq 0$,  we are able to define the M\"obius transformation. In fact,  suppose by contradiction that there exists $L>0$ such that the function $f_a(\cdot,L)$ is entire. Then, all roots of the polynomial $q(\cdot)$ must satisfy \eqref{mobius}, i.e., there exists a M\"obius transformation that takes each root $\xi_0$ of $q(\cdot)$ into $e^{-iL\xi_0}$. However, this contradicts Lemma \ref{mobiuslemma} and proves that if $(\NN)$ holds then $\FF_{\alpha r}=\emptyset$ for all $r\in\R$. On the other hand, suppose that $d(\alpha)=0$ and note that by using the claim 2, we can conclude that the set of the imaginary parts of the polynomial $q(\cdot)$ has at least three elements, thus it follows from Lemma \ref{mobiuslemma2} that $\FF_{\alpha r}=\emptyset$ for all $r\in\R$. Note that in both cases, we have that $\FF_{\alpha r}=\emptyset$, which implies that $(\NN)$ only has the trivial solution for any $L>0$, and the proof of Lemma \ref{lem2} is archived.
\end{proof}
\begin{proof}[Proof of Proposition \ref{OI_prop}]
Notice that \eqref{cont} implies that the solution $u$ can not be identically zero. However, from  Lemma \ref{SP}, one can conclude that $u=0$, which drives us to a contradiction.
\end{proof}

\subsection{Proof of Theorem \ref{main2}}
Let us consider the nonlinear Kawahara system \eqref{sis1}-\eqref{F}. Consider $
\left\|\left(u_{0}, z_{0}\right)\right\|_H \leq r,$
where $r$ will be chosen later.  The solution $u$ of \eqref{sis1}-\eqref{F}, with $p=2$, can be written as $u=u_1+u_2$, where $u_1$ is the solution of
$$\begin{cases}\partial_t u_{1}-\partial^5_x u_1+\partial^3_x u_{1}+\partial_x u_{1}=0, & x \in(0, L), t>0 \\ u^{1}(0, t)=u^{1}(L, t)=\partial_x u_1(0,t)=\partial_x u_1(L,t)=0, & t>0 \\ \partial^2_x u_{1}(L, t)=\alpha \partial^2_x u_{1}(0, t)+\beta \partial^2_x u_{1}(0, t-h), & t>0 \\ \partial^2_x u_{1}(0, t)=z_{0}(t), & t \in(-h, 0) \\ u^{1}(x, 0)=u_{0}(x), & x \in(0, L),\end{cases}$$
and $u_{2}$ is solution of
$$\begin{cases}
\partial_t u_{2}-\partial^5_x u_2+\partial^2_x u_{2}+\partial_x u_{2}=-u^2 \partial_x u, & x \in(0, L), t>0 \\ u_{2}(0, t)=u_{2}(L, \\ \partial^2_x u_{2}(L, t)=\alpha \partial^2_x u_{2}(0, t)+\beta \partial^2_x u_{2}(0, t-h), & t \in(-h, 0) \\ \partial^2_x u_{2}(0, t)=0, & x \in(0, L),  \\ u_{2}(x, 0)=0, & x \in(0,L),\end{cases}$$
Note that, in this case,  $u_1$ is the solution of \eqref{2.1}-\eqref{trans. eq} with the initial data $\left(u_{0}, z_{0}\right)\in H$ and $u_2$ is solution of \eqref{with source term} with null data and right-hand side $f=u^2 \partial_x u \in L^1(0,T;L^2(0,L))$,  as in Lemma \ref{uux}.

Now, thanks to \eqref{semigroup},  Proposition \ref{source term} and Lemma \ref{uux}, we have that
\begin{equation}\label{31}
\begin{split}
\|(u(T), z(T))\|_{H} \leq&\left\|\left(u^{1}(T), z^{1}(T)\right)\right\|_{H}+\left\|\left(u^{2}(T), z^{2}(T)\right)\right\|_{H}\\
\leq& \gamma\left\|\left(u_{0}, z_{0}(-h \cdot)\right)\right\|_{H}+C\left\|u^p u_{x}\right\|_{L^{1}\left(0, T, L^{2}(0, L)\right)}\\
\leq& \gamma\left\|\left(u_{0}, z_{0}(-h \cdot)\right)\right\|_{H}+C\|u\|_{L^{2}\left(0, T, H^{2}(0, L)\right)}^{2},
\end{split}
\end{equation}
with $\gamma\in(0,1)$. The goal now is to deal with the lest term of the previous inequality. To this end, we use the multipliers method.  First, we multiply the first equation of \eqref{sis1}-\eqref{F} by $xu$ and integrate by parts to obtain
\begin{equation*}
\begin{split}
\frac{1}{2} \int_{0}^{L} x|u(x, T)|^{2} d x+\frac{3}{2} \int_{0}^{T} \int_{0}^{L}\left| \partial_x u(x, t)\right|^{2} d x d t+\frac{5}{2} \int_{0}^{T} \int_{0}^{L}\left| \partial^2_xu(x, t)\right|^{2} d x dt \\
=\frac{1}{2} \int_{0}^{T} \int_{0}^{L}|u(x, t)|^{2} d x dt+\frac{L}{2}\int_0^T (\partial^2_x u(L,t))^2 dt+\frac{1}{2} \int_{0}^{L} x\left| u_{0}(x)\right|^{2} d x +\frac{1}{4} \int_{0}^{T} \int_{0}^{L}\left|u\right|^{4} dxdt.
\end{split}
\end{equation*}
Consequently,  using the boundary condition of \eqref{sis1}-\eqref{F} and \eqref{energia}, we get
\begin{equation*}
\begin{split}
 \int_{0}^{T} \int_{0}^{L}\left| \partial_x u (x, t)\right|^{2} d x d t&+ \int_{0}^{T} \int_{0}^{L}\left|\partial^2_x u(x, t)\right|^{2} d x d t\leq(T+L)\left\|\left(u_{0}, z_{0}\right)\right\|_{H}^2\\&+ L\int_0^T(\alpha \partial^2_x u (0,t)+\beta z(1,t))^2dt+\frac{1}{2}\int_0^T\int_0^L\left|u\right|^{4} dxdt.
\end{split}
\end{equation*}
Note that Gagliardo–Nirenberg inequality ensures that
\begin{equation*}
	\begin{split}
		\int_{0}^{T} \int_{0}^{L} u^{4} d x d t  \leq &C \int_{0}^{T}\|u\|_{L^2(0,L)}^{3}\left\|u_{x}\right\|_{L^2(0,L)} d t	\\
		\leq & C\frac{1}{2\varepsilon}\int_{0}^{T}\|u\|_{L^2(0,L)}^{6} d t+C \frac{\varepsilon}{2} \int_{0}^{T}\left\|u_{x}\right\|_{L^2(0,L)}^{2} d t\\
		\leq & C(T)\frac{1}{2\varepsilon}\|u\|_{L^{\infty}\left(0, T ; L^{2}(0, L)\right)}^{6}+C \frac{\varepsilon}{2}\|u\|_{L^{2}\left(0, T ; H^{2}(0, L)\right)}^{2}\\
		\leq & C(T)\frac{1}{2\varepsilon}\left\|(u_{0},z_0)\right\|^{6}_{H}+C \frac{\varepsilon}{2}\|u\|_{L^{2}\left(0, T ; H^{2}(0, L)\right)}^{2}.
	\end{split}
\end{equation*}
Putting together the previous inequalities we have
\begin{equation}\label{32}
\begin{split}
 &\int_{0}^{T} \int_{0}^{L}\left|\partial_x u(x, t)\right|^{2} dx dt+ \int_{0}^{T} \int_{0}^{L}\left| \partial^2_x u (x, t)\right|^{2} dx dt\leq(T+L)\left\|\left(u_{0}, z_{0}\right)\right\|_{H}^2\\&+ L\int_0^T(\alpha \partial^2_x u(0,t)+\beta z(1,t))^2dt+C(T)\frac{1}{2\varepsilon}\left\|(u_{0},z_0)\right\|^{6}_{H}+C \frac{\varepsilon}{2}\|u\|_{L^{2}\left(0, T ; H^{2}(0, L)\right)}^{2}.
\end{split}
\end{equation}

Now, multiplying the first equation of \eqref{sis1} by $u$ and integrating by parts yields that
$$
\int_{0}^{L} u^{2}(x, T) d x-\int_{0}^{L} u_{0}^{2}(x) d x -\int_{0}^{T}\left(\alpha u_{xx}(0, t)+\beta z(1, t)\right)^{2} d t+\int_{0}^{T} u_{x}^{2}(0, t) dt=0
$$
Using the same idea as in the proof of \eqref{2.14},  we have that
$$
\int_{0}^{T} (\partial^2_x u)^{2}(0, t) d t+\int_{0}^{T} z^{2}(1, t) d t \leq C\left\|\left(u_{0}, z_{0}\right)\right\|_{H}^{2}.
$$
Consequently,  the previous inequality gives
\begin{equation*}
\begin{split}
\int_{0}^{T}\left(\alpha \partial^2_x u(0, t)+\beta z(1, t)\right)^{2} dt
\leq 2 C\left(\alpha^{2}+\beta^{2}\right)\left\|\left(u_{0}, z_{0}\right)\right\|_{H}^{2}.
\end{split}
\end{equation*}
Thus,  putting the previous inequality in \eqref{32},  and choosing $\varepsilon>0$ sufficiently small,  there exists $C>0$ such that
\begin{equation}\label{33}
\|u\|_{L^{2}\left(0, T ; H^{2}(0, L)\right)}^{2}\leq C\left(\left\|\left(u_{0}, z_{0}\right)\right\|_{H}^{2}+\left\|\left(u_{0}, z_{0}\right)\right\|_{H}^{6}\right).
\end{equation}
Finally,  gathering \eqref{31} and \eqref{33}, there exists $C>0$ such that the following holds true
$$
\|(u(T), z(T))\|_{H} \leq\left\|\left(u_{0}, z_{0}\right)\right\|_{H}\left(\gamma+C\left\|\left(u_{0}, z_{0}\right)\right\|_{H}+C\left\|\left(u_{0}, z_{0}\right)\right\|_{H}^{5}\right),
$$
which implies
$$
\|(u(T), z(T))\|_{H} \leq\left\|\left(u_{0}, z_{0}\right)\right\|_{H}\left(\gamma+C r+C r^{5}\right) .
$$
Given $\epsilon>0$ small enough such that $\gamma+\epsilon<1$, we can take $r$ small enough such that $r+r^{5}<\frac{\epsilon}{C}$, in order to have
$$
\|(u(T), z(T))\|_{H} \leq(\gamma+\epsilon)\left\|\left(u_{0}, z_{0}\right)\right\|_{H},
$$
with  $\gamma+\epsilon<1$.  Theorem \ref{main2} follows using the semigroup property as in \eqref{semigroup}.  \qed

\section{Further comments and open problems} \label{Sec5}
Our work presents a further step after the work \cite{ara} for a better understanding of the stabilization problem for the Kawahara equation. Indeed, a boundary time-delayed damping control is proposed to stabilize the equation in contrast to \cite{ara}, where an interior damping is required and no delay is taken into consideration.  We conclude our paper with a few comments and also some open problems.
\begin{remark} In what concerns our main results, Theorems \ref{Lyapunov} and \ref{main2}, the following remarks are worth mentioning:
\begin{itemize}
\item Note that the rate $\lambda$ of the Theorem \ref{Lyapunov} decreases as the delay $h$ increases, since we have the restriction \eqref{lambda}.
\item A simple calculation shows that taking $\mu_{1}, \mu_{2} \in (0,1)$ in Theorem \ref{Lyapunov} such that
	\begin{equation*}
		\mu_{2} < \min\left\{ 1- |\beta|-\alpha^{2}, \frac{(|\beta| - 1)^{2}- \alpha^{2}}{1 - |\beta|}, \frac{\alpha^{2} - \beta^{2} + |\beta|}{|\beta|}\right\}
	\end{equation*}
	and
	\begin{equation*}
		\mu_{1} < \min\left\{ \frac{1 -|\beta|- \mu_{2} - \alpha^{2} }{L \alpha^{2} }, \frac{(|\beta| - 1)^{2} - \alpha^{2} - \mu_{2}(1 - |\beta|)}{L(\alpha^{2} - \beta^{2} + |\beta|(1 - \mu_{2}))} \right\}
	\end{equation*}
implies that $M_{\mu_{1}}^{\mu_{2}}$ is negative definite.
\item Note that the presence of the nonlinearity on the equation yields the restriction about the initial data. Hence, if we remove it, that is, by considering the linear system, it is possible to obtain the same result of the Theorem \ref{Lyapunov}, with the same process. Nevertheless, the decay rate $\lambda$ is given by
	\begin{equation}
		\lambda \leq \min\left\{ \frac{\mu_{2}}{2h(\mu_{2} + |\beta|)} ,\frac{3b\pi^2-L^2a}{2L^2(1+L\mu_1)} \right\}.
	\end{equation}
	\item For sake of simplicity, we only considered in this article the nonlinearity $u^2u_x$.  However, Theorems \ref{Lyapunov} and \ref{main2} are still valid for $u^pu_x$,  $p\in[1,2)$, where the proof is very similar and hence omitted.

	\item Recently,  Zhou \cite{Zhou} proved the well-posedness of the following initial boundary value problem
\begin{equation}\label{int2}
\left\{\begin{array}{lll}
\partial_{t} u-\partial_x^5 u=c_{1} u \partial_x uu +c_{2} u^{2} \partial_x u+b_{1}  \partial_x u \partial_x^2 u+b_{2} u \partial_x^3 u,& x \in(0, L), \ t \in \mathbb{R}^{+}, \\
u(t, 0)=h_{1}(t), \quad u(t, L)=h_{2}(t),  \quad \partial_x u (t, 0)=h_{3}(t), &  t \in \mathbb{R}^{+},\\
\partial_x u (t, L)=h_{4}(t), \quad \partial_x^2 u(t, L)=h(t), &  t \in \mathbb{R}^{+},\\
u(0, x)=u_{0}(x),& x \in(0, L),
\end{array}\right.
\end{equation}
Thus, due to this result,  when we consider $b_1=b_2=0$ and the combination $c_1u \partial_x u+c_2u^2 \partial_x u$ instead of $u^p \partial_x u$, for $p\in[1,2]$, in \eqref{sis1}, the main results of our article remains valid.
\item We point out that considering $a=0$ in \eqref{sis1},  Theorem \ref{Lyapunov} holds true. Additionally,  no restriction is necessary in the length $L>0$, and also Theorem \ref{main2} is still verified (see, for instance, \cite{cajesus,vasi}).
\end{itemize}
\end{remark}

\subsection{Open problems} Based on the outcomes of this paper on the dispersive Kawahara equation, some interesting open problems appear.

\subsubsection{Restriction in the Lyapunov approach}
Observe that in our first result, Theorem \ref{Lyapunov},  due the fact that the result is based on the appropriate choice of Lyapunov functional,  we have a restriction \eqref{L} on the length $L$. This is due to the choice of the Morawetz multipliers $x$ in the expression of $V_1$ defined by \eqref{V1}.  Therefore, the following natural question arises.

\vspace{0.1cm}
 \noindent\textbf{Question $A$:} Can we choose another Lyapunov functional, instead of the previous one to remove the restriction over $L$?

\subsubsection{Critical set} As observed in \cite{ara}, considering the following initial boundary value problem for Kawahara equation
\begin{equation}\label{kka}
\begin{cases}
u_{t}-u_x+u_{xxx}-u_{xxxxx}=0, & x \in(0, L), t>0 \\
u(0, t)=u(L, t)=u_{x}(L, t)=u_{x}(0, t)=u_{xx}(L, t)=0, & t>0, \\
u(x, 0)=u_{0}(x), & x \in(0, L),
\end{cases}
\end{equation}
it is possible to construct a nontrivial steady-state solution to  \eqref{kka} with a non-zero initial datum $u_{0}(x)\not \equiv 0$ and homogeneous boundary conditions upon the endpoints of the interval with a critical length. Precisely,  when the authors considered the following constants
\[
a=\sqrt{{\sqrt{5}+1}/2},\ \ b=\sqrt{{\sqrt{5}-1}/2},  \ \ A=C_{2}+C_{3},\ \ B=C_{2}-C_{3}
\]
\[
 C_{2}=1-e^{-aL}, \ \ C_{3}=e^{aL}-1, \ \ C_{1}=-\left(  1+\frac{a^{2}}{b^{2}}\right)  A,\ \ C_{4}=\frac{a^{2}}{b^{2}}A,\ \ C_{5}=-\frac{a}{b}B,
\]
they were able to define the set
\[
\mathcal{N}=\left\{  L>0:e^{ibL}=\left(  \frac{C_{4}+iC_{5}}{|C_{4}+iC_{5}%
|}\right)  ^{2}\right\}  \subset\mathbb{R}^{+}%
\]
and
\[
u(x)=C_{1}+C_{2}e^{ax}+C_{3}e^{-ax}+C_{4}\cos(bx)+C_{5}\sin(bx)\not \equiv
0,\ \ x\in(0,L).
\]
If $L\in\mathcal{N},$ then $u=u(x)$ solves
$-u^{\prime\prime\prime\prime\prime}+u^{\prime\prime\prime}+u^{\prime}=0,$
and satisfies $u(0)=u^{\prime}(0)=u^{\prime\prime}(0)=u(L)=u^{\prime}(L)=u^{\prime\prime
}(L)=0.$

So, in our context, if we consider a function $N_{\alpha}:\mathbb{C}\times(0,\infty)\to\mathbb{C},$ with $\alpha\in\mathbb{C}^4\setminus \{0\}$, whose restriction $N_{\alpha}(\cdot,L)$, given by \eqref{N},  is entire for each $L>0$ and a family of functions $f_{\alpha}(\cdot,L)$, defined by \eqref{f},  in its maximal domain, the following issue appears.

\vspace{0.1cm}
\noindent\textbf{Question ${B}$:}  Is it possible to find $a\in\mathbb{C}^4\setminus \{0\}$ such that the function $f_a(\cdot,L)$ is an entire function?

\vspace{0.1cm}

Note that the proof of Theorem \ref{main2} heavily relies on a unique continuation property of the spectral problem associated with the space operator (see Lemma \ref{lem2}). However, due the structure of the terms  $\partial_x^3$ and $\partial_x^5$ (see Lemma \ref{lem2}), we are unable to study the spectral problem in a direct way as in \cite{ro}. Hence, due of these two different dispersions third and fifth order, we believe that a new approach is needed to tackle the previous open question.

\subsection*{Acknowledgments} Capistrano–Filho was supported by CNPq grant 307808/2021-1,  CAPES grants 88881.311964/2018-01 and 88881.520205/2020-01,  MATHAMSUD grant 21-MATH-03 and Propesqi (UFPE).   De Sousa acknowledges support from CAPES-Brazil and CNPq-Brazil.  Gonzalez Martinez was supported by FACEPE grant BFP-0065-1.01/21.  This work is part of the PhD thesis of De Sousa at Department of Mathematics of the Universidade Federal de Pernambuco.



\begin{thebibliography}{90}
\bibitem{ASL} B. Alvarez-Samaniego and D. Lannes, Large time existence for 3D water-waves and asymptotics, \emph{Invent. Math.}, 171 (2008), 485--541.


\bibitem{ara} F. D. Araruna, R. A. Capistrano-Filho and G. G. Doronin, Energy decay for the modified Kawahara equation posed in a bounded domain, \emph{J. Math. Anal. Appl.}, 385 (2012), 743--756.

\bibitem{BCV} L. Baudouin, E. Crépeau, J. Valein. Two approaches for the stabilization of nonlinear KdV equation with boundary time-delay feedback, \emph{IEEE Trans. Automat. Control.},  64  (2019), 1403--1414.

\bibitem{BLS}J. L. Bona, D. Lannes and J.-C. Saut, {Asymptotic models for internal waves}, \emph{J. Math. Pures Appl.}, (9):89 (2008), 538--566.


\bibitem{cajesus} R. A. Capistrano-Filho and I. M. de Jesus, {Massera’s theorems for a higher order dispersive system,} arXiv:2205.12200 [math.AP].


\bibitem{CaGo} R. A. Capistrano-Filho and M. M. de S. Gomes,  {Well-posedness and controllability of Kawahara equation in weighted Sobolev spaces}, \emph{Nonlinear Analysis}, {207} (2011), 1--24.

\bibitem{CaVi} R. A. Capistrano-Filho and V. H. Gonzalez Martinez, {Stabilization results for delayed fifth order KdV-type equation in a bounded domain},  arXiv:2112.14854 [math.AP].

\bibitem{CaSo} R. A. Capistrano-Filho and L. S. de Sousa, {Control results with overdetermination condition for higher order dispersive system}, \emph{Journal of Mathematical Analysis and Applications}, 506 (2022), 1--22.

\bibitem{CaSoGa}R. A. Capistrano-Filho, L. S. de Sousa and F. A. Gallego, {Control of Kawahara equation with overdetermination condition: The unbounded cases},  arXiv:2110.08803 [math.AP].

\bibitem{boumediene} B. Chentouf,  {Well-posedness and exponential stability of the Kawahara equation with a time-delayed localized damping}, \emph{Mathematical Methods in the Applied Sciences,}  https://doi.org/10.1002/mma.8369.

\bibitem{MoChen} M. Chen, {\em Internal controllability of the Kawahara equation on a bounded domain}, \emph{Nonlinear Analysis}, 185 (2019), 356--373.


\bibitem{coc1} G. M. Coclite and L. di Ruvo. On the classical solutions for a Rosenau-Korteweg-deVries-Kawahara type equation, \emph{Asymptot. Anal.}, 129 (2022), 51--73.

\bibitem{coc2} G. M. Coclite and L. di Ruvo, Wellposedness of the classical solutions for a Kawahara--Korteweg--de Vries type equation., \emph{J. Evolution Equations}, 21 (2021), 625--651.

    \bibitem{coc3} G. M. Coclite and L. di Ruvo, Convergence results related to the modified Kawahara equation,  \emph{Boll. Unione Mat. Ital.}, 8 (2016), 265--286.


\bibitem{cui} S. Cui, S. Tao, Strichartz estimates for dispersive equations and solvability of the Kawahara equation, \emph{J. Math. Anal. Appl.},  304 (2005), 683-702.

\bibitem{dor1} G. G. Doronin, N. A. Larkin, Kawahara equation in a quarter-plane and in a finite domain, \emph{Bol. Soc. Parana. Mat.}, 25 (2007) 9--16.

\bibitem{dor2} G. G. Doronin and N. A. Larkin, Boundary value problems for the stationary Kawahara equation, \emph{Nonlinear Analysis}, 69 (2008), 1655-1665.

\bibitem{dor3} G. G. Doronin and N. A. Larkin, Kawahara equation in a bounded domain, \emph{Discrete Contin. Dyn. Syst. Ser. B}, 10 (2008) 783-799.


\bibitem{santos}A. L. C. dos Santos, P. N. da Silva and C. F. Vasconcellos, Entire functions related to stationary solutions of the Kawahara equation, \emph{Electron. J. Differential Equations}, (43) (2016), 13 pp.


\bibitem{hir} H. Hirayama, Local well-posedness for the periodic higher order KdV type equations, \emph{NoDEA Nonlinear Differential Equations Appl.}, 19 (2012) 677--693.

\bibitem{Hasimoto1970}  H. Hasimoto, \emph{Water waves}, Kagaku, 40,  401--408  [Japanese]  (1970).

\bibitem{isa} P. Isaza, F. Linares, G. Ponce, Decay properties for solutions of fifth order nonlinear dispersive equations, \emph{J. Differential Equations}, 258 (2015), 764-795.

\bibitem{kat} T. Kato, Low regularity well-posedness for the periodic Kawahara equation, \emph{Differential Integral Equations}, 25 (2012), 1011-1036.

\bibitem {Kawahara}T. Kawahara, {Oscillatory solitary waves in dispersive media}, \emph{J. Phys. Soc. Japan},  33 (1972), 260--264.

\bibitem{Lannes} D. Lannes, \textit{The water waves problem. Mathematical analysis and asymptotics}. Mathematical Surveys and Monographs, 188. American Mathematical Society, Providence, RI, 2013. xx+321 pp.


\bibitem{lar1} N. A. Larkin, Correct initial boundary value problems for dispersive equations, \emph{J. Math. Anal. Appl.} 344 (2008), 1079-1092.

\bibitem{lar2} N. A. Larkin, M.H. Simoes, The Kawahara equation on bounded intervals and on a half-line, \emph{Nonlinear Anal. TMA}, 127 (2015), 397-412.

\bibitem{np} S. Nicaise and C. Pignotti, Stability and instability results of the wave equation with a delay term in the boundary or internal feedbacks, \emph{SIAM J. Control Optim.} 45 (2006),  1561--1585.

\bibitem {paz} A. Pazy, \emph{Semigroups of Linear Operators and Applications to Partial Differential Equations}, Springer-Verlag, New York, 1983.


\bibitem{ro} L. Rosier, Exact boundary controllability for the Korteweg--de Vries equation on a bounded domain, \emph{ ESAIM Control Optim. Calc. Var.}, 2 (1997), 33--55.

\bibitem{Simon} J. Simon,  Compact sets in the space $L^p(0,T;B)$,  \textit{Ann. Mat. Pura Appl.}, vol. 146 (1986),   65--96.


\bibitem {vasi1} C. F. Vasconcellos and P. N. Silva, Stabilization of the linear Kawahara equation with localized damping, \emph{Asymptotic Analysis}, 58 (2008), 229--252.

\bibitem {vasi2} C. F. Vasconcellos and P. N. Silva, Stabilization of the linear Kawahara equation with localized damping, \emph{Asymptotic Analysis}, 66 (2010), 119--124.

\bibitem {vasi} C. F. Vasconcellos and P. N. Silva, Stabilization of the Kawahara equation with localized damping, \emph{ESAIM Control Optim. Calc. Var.}, 17 (2011), 102--116.


\bibitem {xyl} G. Q. Xu, S. P. Yung, and L. K. Li, Stabilization of wave systems with input delay in the boundary control, \emph{ESAIM Control Optim. Calc. Var.}, {12} (2006), 770--785.

\bibitem{Zhou} D. Zhou, {Non-homogeneous initial-boundary-value problem of the fifth-order Korteweg-de Vries equation with a nonlinear dispersive term},  \emph{Journal of Mathematical Analysis and Applications}, 497 (2021).

\end{thebibliography}
\end{document}